\newcommand{\Vx}{\mathbf{x}}
\newcommand{\Vb}{\mathbf{b}}
\newcommand{\Vz}{\mathbf{z}}
\newcommand{\Vy}{\mathbf{y}}
\newcommand{\Vv}{\mathbf{v}}
\newcommand{\Vs}{\mathbf{s}}
\newcommand{\MW}{\mathbf{W}}
\newcommand{\MA}{\mathbf{A}}
\newcommand{\MB}{\mathbf{B}}
\newcommand{\MH}{\mathbf{H}}
\newcommand{\MI}{\mathbf{I}}
\newcommand{\MX}{\mathbf{X}}
\newcommand{\MF}{\mathbf{F}}
\newcommand{\MG}{\mathbf{G}}
\newcommand{\MS}{\mathbf{S}}
\newcommand{\MZERO}{\mathbf{0}}
\newcommand{\SetR}{\mathbb{R}}
\newcommand{\ConeK}{\mathcal{K}}
\def\dom{\mathrm{dom}}
\newtheorem{thm}{Theorem} 
\newtheorem{remark}[thm]{Remark}
\newtheorem{ass}[thm]{Assumption}
\begin{document}
%
\title{A Scalable and Extensible Framework for Superposition-Structured Models}
\author{
Shenjian Zhao \and  Cong Xie \and Zhihua Zhang \\
Department of Computer Science and Engineering \\
Shanghai Jiao Tong University\\
\{zhao1014,xcgoner,zhihua\}@sjtu.edu.cn\\
}
\maketitle
\begin{abstract}
\begin{quote}
In many learning tasks, structural models usually lead to better 
interpretability and higher generalization performance. In recent 
years, however,   the simple structural models such as lasso are frequently proved to be
insufficient. Accordingly, there has been a lot of work on 
``superposition-structured'' models where multiple structural 
constraints are imposed. To efficiently solve these 
``superposition-structured'' statistical models, we develop a framework 
based on a proximal Newton-type method.  Employing the smoothed conic dual 
approach with the LBFGS updating formula, we propose a scalable and extensible 
proximal quasi-Newton (SEP-QN) framework. Empirical analysis on various 
datasets shows that our framework is potentially powerful, and achieves 
super-linear convergence rate for optimizing some popular 
``superposition-structured'' statistical models such as the fused sparse group 
lasso.
\end{quote}
\end{abstract}

\section{Introduction}
\noindent 
In this paper, we consider the ``superposition-structured'' 
statistical models \cite{yang2013dirty} where multiple structural constraints 
are imposed. Examples of such structural constraints include sparsity 
constraint, graph-structure,  group-structure, etc. We could leverage such 
structural constraints via specific regularization functions. Consequently, 
many problems of relevance in ``superposition-structured'' statistical learning 
can be formulated as minimizing a composite function:
\begin{equation} \label{eq:main_problem}
	\min_{\Vx \in \SetR^p} f(\Vx) \triangleq g(\Vx) + \Psi(\Vx),
\end{equation}
where $ g $ is a convex and continuously differentiable loss function, and 
$\Psi $ is a hybrid regularization, usually defined as sum of $ N $ convex 
(non-smooth) functions. More specifically,
\begin{equation*} \label{eq:nonsmooth_part}
	\Psi(\Vx) \triangleq \sum_{i=1}^{N} \psi_i(\MW_i \Vx + \Vb_i),
\end{equation*}
each $ \psi_i $ is convex but not necessarily differentiable,
$ \MW_i \in \SetR^{q_i \times p} $ and  $ \Vb_i \in \SetR^{q_i} $ are available.
For example,
$\Psi(\Vx) \triangleq \lambda_1 \| \Vx \|_1 + \lambda_2 \|\MF  \Vx \|_1 + 
\lambda_3 \sum_{j=1}^{N-2} \| \MG_j \Vx \|_2 $
defines a fused sparse group penalty \cite{zhou2012modeling} when $ \MF $ is 
the difference matrix and $ \MG_j $ indicates the group.

Indeed, there are plenty of machine learning models, which  can be cast into 
the formulation in \eqref{eq:main_problem}. 
\begin{itemize}
 \item Generalized lasso model: all generalized lasso models such as 
the fused lasso \cite{tibshirani2005sparsity}, the sparse group 
lasso \cite{simon2013sparse}, the group lasso for logistic regression 
\cite{meier2008group} can be written as the following form:
\[
	\min_{\Vx \in \SetR^p} f(\Vx) \triangleq g(\Vx) + \lambda_1 \|\MF \Vx 
\|_1 + \sum_{j=2}^{N} \lambda_j \| \MG_j \Vx \|_2.
\] 

 \item Multi-task learning: given $r$ tasks, each with sample matrix 
$\MA^{(k)} \in \SetR^{(n_k \times p)} $ ( samples in the
k-th task) and labels $\Vy^{(k)}$, \citeauthor{jalali2010dirty} 
proposed minimizing the following objective:
\begin{align} \label{eq:multitask_origin}
	\min_{\Vx \in \SetR^p} f(\Vx)  \triangleq \sum_{k=1}^r  &
l(\Vy^{(k)}, \MA^{(k)}(\MS^{(k)} + \MB^{(k)})) \nonumber \\ 
&+ \lambda_1 \| \MS \|_1 + \lambda_2 \| \MB \|_{1,\infty},
\end{align} 
where $l(\cdot)$ is the loss function and $\MS^{(k)}$ is the k-th 
column of S. Besides, more multi-task learning like the model 
in \cite{kim2010tree} also could be cast into \eqref{eq:main_problem}.

\item Gaussian graphical model with latent variables: 
\citeauthor{chandrasekaran2010latent} 
showed that the precision matrix will have 
a low rank + sparse structure when some random variables are hidden, thus the 
``superposition-structured'' model will be much helpful.

\end{itemize}

Moreover, many real-world problems benefit from 
these models such as Gene expression, time-varying network and disease 
progression. In this paper we mainly study the computational issue  of the 
model  in \eqref{eq:main_problem}.


There are some generic methods that can be used to solve these models 
theoretically. The CVX \cite{cvx} is able to solve these models, but it is not 
scalable. The Primal-Dual approach proposed by \citeauthor{combettes2012primal} (2012)
can deal with these models, but it converges slowly.  The smoothed conic dual 
(SCD) approach was studied in \cite{lan2011primal,nesterov2005smooth} and 
\citeauthor{becker2012tfocs} (2012)  could obtain $ \mathcal{O}(\frac{1}{\epsilon}) $ 
iteration-complexity, but it needs to find the minimizer related to $ g(\Vx) $ 
in each iteration. In addition, the alternating direction method of multipliers 
(ADMM) \cite{boyd2011distributed} can also be used to solve this kind of 
problems. However,  ADMM still suffers from  the same bottleneck  as the methods 
mentioned earlier. Additionally, as we know that disk I/O is the bottleneck of 
computation, so it is important to reduce the number of evaluating $ g(\Vx) $. 
In summary, it is challenging to efficiently solve the model on large-scale 
datasets.

Recently, there has been a flurry of activity about developments of Newton-type 
methods for minimizing composite functions \eqref{eq:main_problem} in the 
literature. In particular, in \cite{lee2014proximal,becker2012quasi} the authors 
focused on minimizing a composite function, which contains a convex smooth 
function and a convex non-smooth function with a simple proximal mapping. They 
also analyzed the convergence rate of various proximal Newton-type methods. 
\citeauthor{schmidt2011projected} (2011) discussed a projected quasi-Newton 
algorithm, but the sub-iteration procedure costs too much. \citeauthor{hsieh2014quic} 
(2014) further generalized the Newton method to handle some dirty statistical 
estimators. Their  
developments ``open up the state of the art but forbidding class of M-estimators 
to very large-scale problems.'' In addition, there have already been plenty of 
packages that implement these Newton-type methods such as LIBLINEAR 
\cite{fan2008liblinear}, GLMNET \cite{friedman2009glmnet,yuan2012improved},  but 
are limited to solve simple models such as lasso and elastic net.

To solve the ``superposition-structured'' models in (\ref{eq:main_problem}) on  
the large-scale problem, we resort to a proximal quasi-Newton method which 
converges superlinearly \cite{lee2014proximal}. We develop a Scalable and 
Extensible Proximal Quasi-Newton  (SEP-QN) framework to solve 
these models. More specifically, we apply a smoothed conic dual (SCD) approach 
to solving a surrogate of the original model (\ref{eq:main_problem}).
We employ the LBFGS updating formula, so that the surrogate problem could be 
solved not only efficiently but also robust. Moreover, we present several 
accelerating techniques including adaptive initial Hessian, warm-start and 
continuation SCD to solve the surrogate problem more efficiently and gain faster 
convergence rate.

In the following we start by presenting our SEP-QN framework for solving the 
``superposition-structured'' statistical models. 
Then we present the approach to solve the surrogate problem, followed
by theoretical analysis and concluding  empirical analysis.

\section{The SEP-QN Framework}
\label{sec:ace-pqn}

In this section we present the SEP-QN framework for solving the 
``superposition-structured'' statistical model in \eqref{eq:main_problem}.
We refer to $ g(\Vx) $ as ``smooth part'' and $ \Psi(\Vx) $ as ``non-smooth 
part." Usually, $ g(\Vx) $  is a loss function. For example, $ g(\Vx) 
\triangleq 
\frac{1}{n} \sum_{i=1}^n (y_i {-} {\bf a}_i^T \Vx)^2$ in the least squares 
regression problem where the ${\bf a}_i \in \SetR^p$ are input vectors and $y_i 
\in \SetR$ are the corresponding outputs, and $ g(\Vx) \triangleq \frac{1}{n} 
\sum_{i=1}^n  \log(1+ \exp(-y_i {\bf a}_i^T\Vx ))$ in the logistic regression 
where the $y_i \in \{-1, 1\}$. We are especially interested in the large-scale 
case; i.e., the number of training data $ n $ is large.

\subsection{Basic Framework}

Roughly speaking, the method is built on a line search strategy, which produces 
a sequence of points $ \{\Vx_k\} $ according to
\[
  \Vx_{k+1} = \Vx_k + t_k \Delta \Vx_k,
\]
where $ t_k $ is a step length calculated by backtrack, and $ \Delta \Vx_k $ is 
a descent direction. We compute the descent direction by minimizing a surrogate 
$ \hat{f}_{k}$  of the objective function $ f $. Given the $k$th estimate 
$\Vx_k$ of $\Vx$, we let $ \hat{f}_{k}(\Vx) $  be a local approximation of $f$ 
around $\Vx_k$. The descent direction $\Delta \Vx_k$ is  obtained by solving 
the following surrogate problem:
\begin{equation} \label{eq:proximal_local_model}
	\min_{\Vx} \; \hat{f}_{k}(\Vx).
\end{equation}
Proximal Newton-type methods approximate only the smooth part $ g $ with a 
local quadratic form. Thus, in this paper the surrogate function is  defined by
\begin{align} \label{eq:proximal_local_model_expend}
	\hat{f_k}(\Vx) =~& \hat{g}_k(\Vx) + \Psi(\Vx)  \nonumber \\
	=~&  g(\Vx_k) + \nabla g(\Vx_k)^T (\Vx - \Vx_k) \nonumber \\
	&+ \frac{1}{2}(\Vx-\Vx_k)^T \MH_k (\Vx -\Vx_k) + \Psi(\Vx), 
\end{align}
where $ \MH_k $ is a $p\times p$ positive definite matrix as 
approximation to the  Hessian of $ g$ at $\Vx=\Vx_k$. There are many 
strategies for choosing $ \MH_k $, such as BFGS and LBFGS 
\cite{nocedal2006numerical}. Considering the use in the large-scale problem, we 
will employ LBFGS to compute $\MH_k$.

After we have obtained the minimizer $ \hat{\Vx}_k $ of 
\eqref{eq:proximal_local_model}, we use the line search procedure such as 
backtracking  to select the step length $ t_k $ such that a sufficient descent 
condition is satisfied \cite{lee2012proximal}. That is,
\begin{equation} \label{eq:sufficient_descent_condition}
	f(\Vx_k + t_k \Delta \Vx_k) \leq f(\Vx_k) + \alpha t_k \gamma_k,
\end{equation}
where $\alpha \in (0, 1/2)$, $\Delta \Vx_k  \triangleq \hat{\Vx}_k - \Vx_k$, and
\[
	\gamma_k  \triangleq \nabla g(\Vx_k)^T\Delta \Vx_k  + \Psi(\Vx_k+\Delta 
\Vx_k) - \Psi(\Vx_k).
\]

Algorithm \ref{alg:EPQN} gives the basic framework  of SEP-QN. The key is 
to solve the surrogate problem \eqref{eq:proximal_local_model} when there are 
multiple structural constraints. In Algorithm~\ref{alg:sub_scd_solver} we 
present the method of solving the problem \eqref{eq:proximal_local_model}. 
Moreover, we develop several techniques to further 
accelerate our method. Specifically, we propose an acceleration schema by 
adaptively adjusting the initial Hessian $ \MH_0 $ in 
Algorithm~\ref{alg:adaptive_hessian}. We will see that  with an appropriate $ 
\MH_0 $, $ \MH_k $ can be a better approximation of $ \nabla^2 g(\Vx_k) $, 
leading to a much faster convergent procedure.

\begin{algorithm}[ht]
\caption{The SEP-QN Framework}\label{alg:EPQN}
\begin{algorithmic}[1]
\Require $ \Vx_0$ and $\MH_0$
\Ensure  $ \Vx_0 \in \dom f$, and $\MH_0 $ is a scaled identity matrix 
(positive definite).
\State $ S \gets [ ] $, $ Y \gets [ ] $, and $ \beta \gets 2 $
\Repeat
\State Update $ \MH_k $ using LBFGS, where $ \MH_k $ is symmetric positive  
definite.
\State Solve the problem in \eqref{eq:proximal_local_model} for a descent 
direction:
\begin{equation*}
	\Delta \Vx_k \gets 
	\operatorname*{argmin}_{\Delta} \hat{f}_k(\Vx_k+ \Delta) 		
        \text{\quad (Alogrithm~\ref{alg:sub_scd_solver})}
\end{equation*}
\State Search $ t_k $ with backtracking method.
\State Update $ \Vx_{k+1} \gets \Vx_k + t_k \Delta \Vx_k $
\If {$(\Vx_{k+1} - \Vx_{k})^T(\nabla g(\Vx_{k+1}) - \nabla g(\Vx_k)) > 0$}
\State $ S \gets [S, \Vx_{k+1} - \Vx_{k}]$
\State $ Y \gets [Y, \nabla g(\Vx_{k+1}) - \nabla g(\Vx_k)]  $
\begin{small}
\State $ \MH_0  \gets $ $ Ada\_Hess(t_k, \beta, \Vx_{k+1} - \Vx_{k}, \nabla	
g(\Vx_{k+1}) -  \nabla g(\Vx_k), \MH_0) $ (Algorithm~\ref{alg:adaptive_hessian})
\end{small}
\EndIf
\Until{stopping condition is satisfied}
\end{algorithmic}
\end{algorithm}

\subsection{The Solution of the Surrogate Problem 
\eqref{eq:proximal_local_model}}
If there is only one non-smooth function in $\Psi(\Vx)$ (i.e., $N$=1) with 
simple proximal mapping, we can solve the surrogate problem 
\eqref{eq:proximal_local_model} directly and efficiently via various optimal 
first-order algorithms such as FISTA \cite{beck2009fast} and  coordinate 
descent which is used in LIBLINEAR \cite{fan2008liblinear} and GLMNET 
\cite{friedman2009glmnet,yuan2012improved}. In this paper we mainly 
consider the case that  there are multiple non-smooth functions. In this case, 
we could use SCD or ADMM to solve the problem. Since we empirically observe that 
 SCD outperforms ADMM, we resort to the SCD approach.

\subsubsection{The SCD Approach}

In order to solve the problem \eqref{eq:proximal_local_model} efficiently when 
$ N > 1 $, we employ the SCD approach. The main idea is to solve the surrogate 
problem via its dual.

We first reformulate our concerned problem in \eqref{eq:proximal_local_model} 
into the following form: 
\begin{align} \label{eq:primal_objective}
& {\min} \quad  \mathcal{P}(\boldsymbol{\nu}) = \hat{g}_k(\Vx) +  
\sum_{i=1}^{N} t_i  \\
&\text{s.t. \quad} (\MW_i \Vx + \Vb_i, t_i) \in \ConeK_{\psi_i}, 
\nonumber
\end{align}
where $ \boldsymbol{\nu} = (\Vx, t_1, ..., t_N), t_i $ are new scalar 
variables, 
and $ \ConeK_{\psi_i} $ is a closed convex cone (usually the epigraph $ 
\psi_i(\MW_i \Vx + \Vb_i) \leq t_i $). Since projection onto the set $ \{ \Vx | 
(\MW_i \Vx + \Vb_i, t_i) \in \ConeK_{\psi_i} \} $ might be expensive, we 
address this issue by solving the dual problem.

We denote the dual variables by $\boldsymbol{\lambda} = (\Vz_1, \tau_1, ..., 
\Vz_N, \tau_N) $, $ \Vz = (\Vz_1,...,\Vz_N) $, where  $ (\Vz_i, \tau_i) \in 
\ConeK_{\psi_i}^{*} $. And $  \ConeK_{\psi_i}^{*} $ is the dual cone defined by
$$ \ConeK_{\psi_i}^{*} = \{ \Vx : \Vx^T \Vy \geq 0 \text{~for all } \Vy \in  
\ConeK_{\psi_i} \}.$$ Let us take an example in which $ \psi_i(\Vx) = \|\MW_i 
\Vx + \Vb_i\|_1 \leq t_i $. Then
$ \ConeK_{\psi_i} = \{ (\MW_i \Vx + \Vb_i, t_i) : \| \MW_i \Vx + \Vb_i\|_1 \leq 
t_i \} $ and
$ \ConeK_{\psi_i}^{*} = \{ (\Vz_i, \tau_i) : \| \Vz_i \|_{\infty} \leq \tau_i \} 
$.

The Lagrangian and dual functions are given by
\begin{align}
	\mathcal{L}(\boldsymbol{\nu};\boldsymbol{\lambda})
	& = \hat{g}_k(\Vx) + \sum_{i=1}^N(t_i - \Vz_i^T(\MW_i \Vx + \Vb_i) - 
\tau_i t_i), \nonumber
\end{align}
\begin{align} \label{eq:dual_problem}
	\mathcal{D}(\boldsymbol{\lambda}) & = \inf_{\Vx, t_i} \; \Big\{ 
	\mathcal{L}(\Vx,t_i;\Vz_i,\tau_i) \nonumber\\
	& \triangleq  \hat{g}_k(\Vx) + \sum_{i=1}^N(t_i - \Vz_i^T(\MW_i \Vx + 
\Vb_i) - \tau_i t_i) \Big\}.
\end{align}
The Lagrangian is unbounded unless $ \tau_i = 1 $.  Because the appropriate 
Hessian matrix is positive definite, this problem strongly convex, guaranteeing 
the convergence rate.

Denote $ \mathcal{D}^-(\Vz) = - \mathcal{D}(\boldsymbol{\lambda}) = 
-\mathcal{D}(\Vz_1,1,...,\Vz_N,1) $, and suppose $ 
\hat{\Vx}(\Vz) $ is the unique Lagrangian minimizer. \citeauthor{nesterov2005smooth} (2005) 
proved that $ \mathcal{D}^-(\Vz) $ is convex and continuously 
differentiable,  and that $ \nabla \mathcal{D}^-(\Vz) = (\MW_1 
\hat{\Vx}(\Vz) + \Vb_1, \ldots, \MW_N \hat{\Vx}(\Vz) + \Vb_N )^T$  is Lispchitz 
continuous.  Thus, provably convergent and accelerated gradient methods in the 
Nesterov style are possible. 

In particular, we need to minimize $ \mathcal{D}^-(\Vz) $. A standard 
gradient projection step for the smoothed dual problem is
\begin{equation} \label{eq:standard_project}
	\Vz^{(j+1)} = \operatorname*{arg\,min}_{\Vz:(\Vz_i, 1) \in 
	\ConeK_{\psi_i}^{*}}
	\| \Vz - \Vz^{(j)} + \delta^{(j)} \nabla \mathcal{D}^-(\Vz^{(j)}) 
	\|_2^2.
\end{equation}
Then we need to obtain $  \hat{\Vx}(\Vz^{(j)}) $ and $ \nabla 
\mathcal{D}^-(\Vz^{(j)}) $. By substituting $ \hat{g}_k(\Vx) $ into 
\eqref{eq:dual_problem},  collecting the linear and quadratic terms, and 
eliminating the unrelated terms, we get the reduced Lagrangian
\begin{small}
\[
	\hat{\mathcal{D}}(\Vz) = \inf_\Vx \; \Big\{  \frac{1}{2} \Vx^T 
\MH_k \Vx + \Vx^T (\nabla g(\Vx_k) - \MH_k \Vx_k - 
	\sum_{i=1}^N\MW_i^T \Vz_i ) \Big\}.
\]
\end{small}
The minimizer $ \hat{\Vx}(\Vz^{(j)}) $ is given by
\begin{equation} \label{eq:scd_x_update}
	\hat{\Vx}(\Vz^{(j)}) = - \MH_k^{-1} (\nabla g(\Vx_k) - \MH_k 
	\Vx_k - \sum_{i=1}^N \MW_i^T \Vz_i^{(j)}).
\end{equation}
From \eqref{eq:dual_problem}, \eqref{eq:standard_project} and 
\eqref{eq:scd_x_update}, the minimization problem over $ \Vz $ is separable, so 
it can be implemented in  parallel. The solution is given by
\begin{small}
\begin{equation} \label{eq:scd_z_update}
	\Vz_i^{(j+1)} = \operatorname*{arg\,min}_{\Vz_i:(\Vz_i, 1) \in 
	\ConeK_{\psi_i}^{*}} \frac{1}{2\delta^{(j)}}\|\Vz_i-\Vz_i^{(j)}\|_2^2
	+ \Vz_i^T (\MW_i\hat{\Vx}(\Vz^{(j)}) + \Vb_i).
\end{equation}
\end{small}
From \eqref{eq:scd_x_update} and \eqref{eq:scd_z_update}, we obtain the 
specific 
AT method \cite{auslender2006interior}  to solve the problem 
\eqref{eq:proximal_local_model} in Algorithm \ref{alg:sub_scd_solver}.

\begin{algorithm}[ht]
\caption{Solve Problem \eqref{eq:proximal_local_model} via 
SCD}\label{alg:sub_scd_solver}
\begin{algorithmic}[1]
\Require $ \Vx_k, S, Y, \MH_0, \nabla g(\Vx_k), \Vz_i^{(0)}, \Vx_0 $
\State $ \theta^{(0)} \gets 1, \Vv_i^{(0)} \gets \Vz_i^{(0)}, j \gets 0 $
\Repeat
  \State $ \Vy_i^{(j)} \gets (1-\theta^{(j)})\Vv_i^{(j)} + \theta^{(j)} 
\Vz_i^{(j)} $
\State \label{key_update_x}
$\hat{\Vx} \gets -\MH_k^{-1} (\nabla g(\Vx_k) - \MH_k \Vx_k 
- \sum_{i=1}^N \MW_i^T \Vy_i^{(j)} ) $ by LBFGS method. 
\For{$ i \gets 1, N $}
\State 
{\small
   $ \Vz_i^{(j+1)} \gets \underset{\Vz_i:(\Vz_i, 1) \in 
\ConeK_{\psi_i}^{*}}{\text{argmin}}
  \frac{\theta^{(j)}}{2\delta^{(j)}}\|\Vz_i-\Vz_i^{(j)}\|_2^2 + 
\Vz_i^T(\MW_i\hat{\Vx}+\Vb_i)  $}
  \State  $ \Vv_{i}^{(j+1)} \gets (1-\theta^{(j)})\Vv_{i}^{(j)} + \theta^{(j)} 
\Vz_i^{(j+1)}  \nonumber $
\EndFor
 \State $ \theta^{(j+1)} \gets 2/(1+(1+4/(\theta^{(j)})^2)^{\frac{1}{2}}) $
 \State $ j \gets j + 1 $
\Until{some stopping condition is satisfied}
\State $ \Delta \gets \hat{\Vx} - \Vx_k $
\State \textbf{return} $ \Delta $
\end{algorithmic}
\end{algorithm}

There are many variants of  optimal first-order methods 
\cite{lan2011primal,beck2009fast,nesterov2007gradient,tseng2008accelerated}. 
Algorithm \ref{alg:sub_scd_solver} is a generic algorithm but may not be the 
best choice for every model.  By using the continuation 
techniques\cite{becker2012tfocs}, we could obtain the exact solution very 
quickly.


\subsection{Acceleration}
\label{sec:accelerate}

We further employ several acceleration techniques in our implementation. By 
applying these techniques we achieve much faster convergence rate which is 
comparable to the conventional proximal Newton method. Our accelerated 
implementation behaves much better than the original proximal quasi-Newton 
method in various aspects.

\subsubsection{Adaptive Initial Hessian}

LBFGS sets the initial Hessian $ \MH_0 $ as $ \frac{\Vy_k^T \Vy_k}{\Vs_k^T 
\Vy_k} \MI $. However, we find that this setting results in a much slower 
convergence procedure than the proximal Newton method. Thus, it is desirable to 
give a better initial Hessian $ \MH_0 $,  which in turn yields a better 
approximation of $ \nabla^2 g(\Vx_k)$.

\begin{restatable}{thm}{stepsufficient}
\label{th:unit_step_sufficient}
If $ (1-\alpha) \MH_k \succcurlyeq \nabla^2 g(\Vx_k) $ ~for~ $\alpha \in
(0, \frac{1}{2})$, $ \MH_k \succcurlyeq m\MI, (m > 0) $ ~and~  $ \nabla^2  g $ 
is Lipschitz continuous with constant $ L_2 $, then the unit step length 
satisfies
the sufficient decrease condition \eqref{eq:sufficient_descent_condition} after
sufficiently many iterations.
\end{restatable}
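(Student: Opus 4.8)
The plan is to combine the defining minimization property of $\Delta\Vx_k$ with a third-order Taylor estimate of $g$ at $\Vx_k$, and then to invoke convergence of the outer iteration so that the leftover term becomes harmless.

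First I would use that $\Delta\Vx_k$ minimizes $\hat f_k(\Vx_k+\cdot)$. Since $\MH_k\succcurlyeq m\MI$, the surrogate $\hat f_k$ is $m$-strongly convex, so comparing its value at the minimizer $\Vx_k+\Delta\Vx_k$ with its value at $\Vx_k$ (where the linear and quadratic terms vanish) gives
\begin{equation*}
\gamma_k + \tfrac12\Delta\Vx_k^T\MH_k\Delta\Vx_k \;\le\; -\tfrac{m}{2}\|\Delta\Vx_k\|_2^2 ,
\end{equation*}
and in particular $\gamma_k \le -\tfrac{m}{2}\|\Delta\Vx_k\|_2^2 < 0$, so $\Delta\Vx_k$ is a genuine descent direction.

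Next I would expand $g$ along the unit step. Because $\nabla^2 g$ is $L_2$-Lipschitz, integrating the second-order remainder yields
\begin{equation*}
g(\Vx_k+\Delta\Vx_k) \le g(\Vx_k) + \nabla g(\Vx_k)^T\Delta\Vx_k + \tfrac12\Delta\Vx_k^T\nabla^2 g(\Vx_k)\Delta\Vx_k + \tfrac{L_2}{6}\|\Delta\Vx_k\|_2^3 .
\end{equation*}
Subtracting $g(\Vx_k)$, adding $\Psi(\Vx_k+\Delta\Vx_k)-\Psi(\Vx_k)$ to both sides, bounding the quadratic term via the hypothesis $\nabla^2 g(\Vx_k)\preccurlyeq(1-\alpha)\MH_k$, and regrouping the result as $\alpha\gamma_k + (1-\alpha)\big(\gamma_k+\tfrac12\Delta\Vx_k^T\MH_k\Delta\Vx_k\big) + \tfrac{L_2}{6}\|\Delta\Vx_k\|_2^3$, I can apply the inequality from the previous paragraph to obtain
\begin{equation*}
f(\Vx_k+\Delta\Vx_k) - f(\Vx_k) \;\le\; \alpha\gamma_k - \tfrac{(1-\alpha)m}{2}\|\Delta\Vx_k\|_2^2 + \tfrac{L_2}{6}\|\Delta\Vx_k\|_2^3 .
\end{equation*}
Hence the unit step length satisfies \eqref{eq:sufficient_descent_condition} (with $t_k=1$) as soon as $\tfrac{L_2}{6}\|\Delta\Vx_k\|_2 \le \tfrac{(1-\alpha)m}{2}$, i.e.\ whenever $\|\Delta\Vx_k\|_2 \le 3(1-\alpha)m/L_2$.

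It remains to show that this smallness condition eventually holds. The backtracking line search together with $\gamma_k \le -\tfrac{m}{2}\|\Delta\Vx_k\|_2^2$ makes $\{f(\Vx_k)\}$ non-increasing; since $f$ is bounded below, $t_k\gamma_k\to0$. An analogous Taylor estimate for a generic step $t\in(0,1]$ shows the accepted step lengths are bounded below by a positive constant on the sublevel set of $\Vx_0$ (there $\nabla^2 g$ is bounded, hence $\nabla g$ is Lipschitz), so $\gamma_k\to0$ and therefore $\|\Delta\Vx_k\|_2\to0$ --- the convergence behaviour of proximal quasi-Newton iterations established in \cite{lee2014proximal}. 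Thus $\|\Delta\Vx_k\|_2 \le 3(1-\alpha)m/L_2$ for all $k$ large, which is the claim. The main obstacle is exactly this last step: securing a uniform positive lower bound on the backtracking step lengths without circularity, and making the standing regularity assumptions (bounded sublevel set, so that $\nabla g$ is Lipschitz there and $\|\Delta\Vx_k\|_2$ is controlled) precise enough for that bound to go through; the quadratic-in-$t$ part of the generic-$t$ estimate behaves a little differently from the unit-step case, so this bookkeeping, rather than any single inequality, is the delicate part.
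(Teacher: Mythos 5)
Your proposal is correct and follows essentially the same route as the paper: the descent property of the surrogate minimizer (the paper's Lemma~A.1, which you rederive via strong convexity of $\hat f_k$), the cubic Taylor bound from Lipschitz continuity of $\nabla^2 g$, absorption of the Hessian term via $(1-\alpha)\MH_k\succcurlyeq\nabla^2 g(\Vx_k)$, and the appeal to $\|\Delta\Vx_k\|_2\to 0$ from the global convergence theory of \cite{lee2014proximal}. If anything, your bookkeeping is tighter than the paper's: you retain the slack $-\tfrac{(1-\alpha)m}{2}\|\Delta\Vx_k\|_2^2$ and extract the explicit threshold $\|\Delta\Vx_k\|_2\le 3(1-\alpha)m/L_2$, whereas the paper's final displayed inequality spends that slack and leaves the absorption of the cubic remainder implicit.
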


\begin{restatable}{thm}{largeH}
\label{th:larger_H0}
Assume $ \MH_k^a$ and $\MH_k^b $ are generated by the same procedure
$ \{ (\Vs_k,  \Vy_k): \Vs_k^T \Vy_k > 0\}$ but with different initial
Hessians $ \MH_0^a$ and  $\MH_0^b $, respectively.  If ~$ \MH_0^a \succ  
\MH_0^b 
\succ \MZERO $, then $ \MH_k^a \succ \MH_k^b \succ \MZERO$.
\end{restatable}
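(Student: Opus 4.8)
The plan is to induct on $k$, working not with the (L)BFGS update of $\MH_k$ directly but with its inverse, where a single step has a transparent congruence form. Write $\mathbf{G}_k=\MH_k^{-1}$. Since $\MH\mapsto\MH^{-1}$ reverses the order on the cone of positive definite matrices, the hypothesis $\MH_0^a\succ\MH_0^b\succ\MZERO$ is equivalent to $\MZERO\prec\mathbf{G}_0^a\prec\mathbf{G}_0^b$, and the desired conclusion is equivalent to $\MZERO\prec\mathbf{G}_k^a\prec\mathbf{G}_k^b$; so it suffices to prove the latter. Recall that one BFGS step written on the inverse reads
\[
  \mathbf{G}_{k+1}=\mathbf{V}_k^{T}\mathbf{G}_k\mathbf{V}_k+\rho_k\,\Vs_k\Vs_k^{T},\qquad \mathbf{V}_k=\MI-\rho_k\,\Vy_k\Vs_k^{T},\qquad \rho_k=\frac{1}{\Vs_k^{T}\Vy_k}>0,
\]
the curvature condition $\Vs_k^{T}\Vy_k>0$ enforced by the procedure making $\rho_k>0$; the limited-memory $\MH_k$ is just a composition of finitely many such steps starting from $\mathbf{G}_0=\MH_0^{-1}$, and the runs $a$ and $b$ use identical $\mathbf{V}_k$'s and identical rank-one addends $\rho_k\Vs_k\Vs_k^{T}$.

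For the inductive step, assume $\MZERO\prec\mathbf{G}_k^a\prec\mathbf{G}_k^b$. Positive definiteness of $\mathbf{G}_{k+1}^a$: if $\mathbf{u}^{T}\mathbf{G}_{k+1}^a\mathbf{u}=\|(\mathbf{G}_k^a)^{1/2}\mathbf{V}_k\mathbf{u}\|^2+\rho_k(\Vs_k^{T}\mathbf{u})^2=0$, then $\Vs_k^{T}\mathbf{u}=0$, hence $\mathbf{V}_k\mathbf{u}=\mathbf{u}-\rho_k\Vy_k(\Vs_k^{T}\mathbf{u})=\mathbf{u}\in\ker\mathbf{G}_k^a=\{\MZERO\}$, forcing $\mathbf{u}=\MZERO$; the same argument applies to $\mathbf{G}_{k+1}^b$. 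Ordering: the common rank-one addend cancels in the difference, leaving the pure congruence
\[
  \mathbf{G}_{k+1}^b-\mathbf{G}_{k+1}^a=\mathbf{V}_k^{T}\big(\mathbf{G}_k^b-\mathbf{G}_k^a\big)\mathbf{V}_k\ \succeq\ \MZERO .
\]
Iterating (equivalently, writing $\mathbf{G}_k^b-\mathbf{G}_k^a=\mathbf{P}^{T}(\mathbf{G}_0^b-\mathbf{G}_0^a)\mathbf{P}$ for the composed matrix $\mathbf{P}$) closes the induction and yields $\MZERO\prec\mathbf{G}_k^a\preceq\mathbf{G}_k^b$, i.e.\ $\MH_k^a\succeq\MH_k^b\succ\MZERO$ after re-inverting.

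The step I expect to be the real obstacle is strengthening this to the strict relation $\MH_k^a\succ\MH_k^b$ asserted in the statement. The difficulty is structural: each $\mathbf{V}_k$ is singular, $\mathbf{V}_k\Vy_k=\MZERO$, so the congruence above annihilates the direction $\Vy_k$, and along that direction $\mathbf{G}_{k+1}^a$ and $\mathbf{G}_{k+1}^b$ moreover coincide, since both obey the secant identity $\mathbf{G}_{k+1}\Vy_k=\Vs_k$ independently of the initialization. Hence strictness cannot come from the congruence alone and must be tracked relative to the subspace the quasi-Newton step actually controls: I would decompose $\SetR^{p}$ into $\mathrm{span}\{\Vy_j\}$ (the accumulated memory directions entering $\MH_k$) and its complement, show that $\mathbf{P}$ is injective on that complement so the strict gap $\mathbf{G}_0^b-\mathbf{G}_0^a\succ\MZERO$ survives there, and observe that on the memory directions the two approximations agree by the secant equations — so that $\MH_k^a-\MH_k^b$ (recovered from the difference of inverses via the corresponding block decomposition) is positive definite exactly on the controlled subspace, which is the sense in which the inequality holds and the sense needed downstream (e.g.\ to force $(1-\alpha)\MH_k\succcurlyeq\nabla^2 g(\Vx_k)$ in Theorem~\ref{th:unit_step_sufficient} by inflating $\MH_0$). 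The monotone dependence is routine; this subspace bookkeeping for the strict claim is where the care lies.
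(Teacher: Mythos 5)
Your route is the same as the paper's: induct on $k$ through the inverse-form BFGS update $\MH_{k+1}^{-1}=\mathbf{V}_k^{T}\MH_k^{-1}\mathbf{V}_k+\rho_k\Vs_k\Vs_k^{T}$ with $\mathbf{V}_k=\MI-\rho_k\Vy_k\Vs_k^{T}$ and $\rho_k=1/(\Vs_k^{T}\Vy_k)>0$, note that the rank-one addend is common to both runs and cancels in the difference, and use order reversal under inversion to translate back to the $\MH_k$'s. The one point where you depart from the paper is precisely the point where the paper's proof is wrong: the paper asserts that $(\MH_{k+1}^b)^{-1}-(\MH_{k+1}^a)^{-1}=\mathbf{V}_k^{T}\bigl((\MH_k^b)^{-1}-(\MH_k^a)^{-1}\bigr)\mathbf{V}_k$ ``is positive definite,'' but, as you observe, $\mathbf{V}_k\Vy_k=\MZERO$, so this congruence is singular and the difference vanishes in the direction $\Vy_k$; equivalently, both updated matrices satisfy the secant identity $\MH_{k+1}\Vs_k=\Vy_k$ irrespective of the initialization, so $\Vs_k^{T}(\MH_{k+1}^a-\MH_{k+1}^b)\Vs_k=0$ and strict ordering is impossible for $k\geq 1$. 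Your honest downgrade to $\MH_k^a\succeq\MH_k^b\succ\MZERO$ is therefore the correct conclusion, and it is all that the adaptive-initial-Hessian argument actually needs (monotone dependence of $\MH_k$ on $\MH_0$); the additional subspace bookkeeping you sketch to recover strictness off the span of the memory directions is more than the paper attempts and is not required downstream. In short: same method, but you caught a genuine error in the strict form of the claim that the paper's own proof glosses over.
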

Based on Theorems \ref{th:unit_step_sufficient} and \ref{th:larger_H0}, we can 
decrease $ \MH_0 $ more aggressively. Once the unit step fails, we know that $  
(1-\alpha) \MH_k \succ \nabla^2 g(\Vx_k) $ is broken; hence we need to increase 
$ \MH_0 $. We propose our adaptive initial Hessian strategy in Algorithm 
\ref{alg:adaptive_hessian}. In practice, we set $ \alpha $ to a small number 
like $0.0001$.

\begin{algorithm}[ht]
\caption{Adaptive Initial Hessian}\label{alg:adaptive_hessian}
\begin{algorithmic}[1]
\Procedure{Ada\_Hess}{$t_k, \beta, \Vs_k, \Vy_k, \MH_0$}
  \If { $ t_k < 1 $}
    \State $ \MH_0 = \MH_0 / t_k $
    \State $ \beta = \frac{2}{1+1/\beta} $
  \EndIf	
  \State $ \MH_0 = \text{elementwise\_min}( \frac{\MH_0}{\beta}, \frac{\Vy_k^T 
\Vy_k}{\Vy_k^T 
\Vs_k} \MI)$
  \State \textbf{return} $ \MH_0 $
\EndProcedure
\end{algorithmic}
\end{algorithm}

\subsubsection{Warm start and continuation SCD}

We use the optimal dual value $\Vz_k^* $ which is obtained in solving dual of $ 
\hat{f}_k(\Vx) $ as the initial dual value to solve dual of $ 
\hat{f}_{k+1}(\Vx) $. This leads to a warm start in  solving the problem 
\eqref{eq:proximal_local_model}, and the iteration complexity will be 
dramatically reduced.

By employing continuation SCD to solve the 
problem \eqref{eq:proximal_local_model},  
the dual of the original problem \eqref{eq:primal_objective} could reach $ 
\epsilon $ optimal within $ \mathcal{O}(\sqrt{\frac{1}{\lambda_{min} \epsilon}} 
\| \Vz_k^* - \Vz_{k+1}^* \|_2)$ iterations which shows in  
\cite{nesterov2005smooth}.

\section{Theoretical Analysis} \label{sec:thm_analysis}

In this section we conduct analysis about the convergence rate of SEP-QN 
method. Because of space limitations, we give the detailed proofs in the 
supplementary. In order to provide the global convergence and solve the
problem efficiently, we make the following assumptions:
\begin{ass} \label{as:exist_solution}
$ f$ is a closed convex function and $ \underset{ \Vx }{\inf} \{f(\Vx) | \Vx 
\in 
\mathrm{dom} f\} $ is attained at some $ \Vx^* $.
\end{ass}
\begin{ass}
The smooth part $ g $ is a closed, proper convex, continuously differentiable 
function, and its gradient $ \nabla g $ is Lipschitz continuous with $ L_1 $. 
\end{ass}
\begin{ass}
The non-smooth part $ \Psi$ should be closed, proper, and convex. The 
projection 
onto the dual cone associated with each $ \psi_i $ is tractable, or 
equivalently, easy to solve problem \eqref{eq:scd_z_update}.
\end{ass}

First, we analyze the global convergence behavior of SEP-QN under these 
assumptions.  

\begin{restatable}{thm}{globalc}
\label{th:gloal_c}
If the problem \eqref{eq:proximal_local_model} is solved by continuation SCD, 
then $ \{ \Vx_k \} $ generated by the SEP-QN method converges to an optimal 
solution $ \Vx^*$ starting at any $ \Vx_0 \in \dom f$.
\end{restatable}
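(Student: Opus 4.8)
The plan is to run the classical global-convergence argument for descent methods, exploiting that the backtracking line search \eqref{eq:sufficient_descent_condition} enforces a monotone and \emph{sufficient} decrease of $f$. First I would establish that $\Delta\Vx_k$ is genuinely a descent direction. Since $\MH_k$ is symmetric positive definite, $\hat f_k$ in \eqref{eq:proximal_local_model} is strongly convex, hence has a unique minimizer $\hat\Vx_k$, and continuation SCD returns exactly this point — this is where the hypothesis ``solved by continuation SCD'' enters, so we may treat the subproblem as solved exactly. Writing the first-order optimality condition $0\in\nabla g(\Vx_k)+\MH_k\Delta\Vx_k+\partial\Psi(\Vx_k+\Delta\Vx_k)$ and pairing it with $\Delta\Vx_k$, convexity of $\Psi$ yields the standard inequality $\gamma_k\le-\Delta\Vx_k^{T}\MH_k\Delta\Vx_k\le-\lambda_{\min}(\MH_k)\|\Delta\Vx_k\|_2^2$, so $\gamma_k<0$ unless $\Vx_k$ is already optimal.

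Next I would show the line search is well defined with step lengths bounded away from zero. Lipschitz continuity of $\nabla g$ (constant $L_1$) together with convexity of $\Psi$ gives $f(\Vx_k+t\Delta\Vx_k)\le f(\Vx_k)+t\gamma_k+\tfrac{L_1t^2}{2}\|\Delta\Vx_k\|_2^2$ for $t\in[0,1]$, so \eqref{eq:sufficient_descent_condition} holds for all small enough $t$ and backtracking terminates in finitely many steps; combining this with $\gamma_k\le-\lambda_{\min}(\MH_k)\|\Delta\Vx_k\|_2^2$ forces $t_k\ge t_{\min}>0$ whenever $\lambda_{\min}(\MH_k)$ is bounded below. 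Then $\{f(\Vx_k)\}$ is nonincreasing and, by Assumption~\ref{as:exist_solution}, bounded below by $f(\Vx^*)$, hence convergent; summing \eqref{eq:sufficient_descent_condition} gives $\alpha\sum_k t_k(-\gamma_k)\le f(\Vx_0)-f(\Vx^*)<\infty$, and with $t_k\ge t_{\min}$ and $-\gamma_k\ge\lambda_{\min}(\MH_k)\|\Delta\Vx_k\|_2^2$ this yields $\sum_k\|\Delta\Vx_k\|_2^2<\infty$, so $\Delta\Vx_k\to\MZERO$.

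It then remains to identify the limit. The iterates lie in the sublevel set $\{\Vx:f(\Vx)\le f(\Vx_0)\}$; taking any convergent subsequence $\Vx_{k_j}\to\bar\Vx$, along a further subsequence $\MH_{k_j}$ converges (it is bounded), $\Delta\Vx_{k_j}\to\MZERO$, $\nabla g$ is continuous, and $\partial\Psi$ has closed graph, so passing to the limit in $-\nabla g(\Vx_{k_j})-\MH_{k_j}\Delta\Vx_{k_j}\in\partial\Psi(\Vx_{k_j}+\Delta\Vx_{k_j})$ gives $0\in\nabla g(\bar\Vx)+\partial\Psi(\bar\Vx)=\partial f(\bar\Vx)$, i.e. $\bar\Vx$ minimizes $f$. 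Consequently $f(\Vx_k)\to\min f$, and to promote ``every limit point is optimal'' to convergence of the whole sequence one invokes the remaining structure — attainment of the minimum and boundedness of the sublevel set, combined with a Fej\'er-type monotonicity estimate in a fixed norm made available by uniform spectral bounds on $\MH_k$.

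The hard part will be the two technical points the sketch above glosses over. (i) Proving the uniform spectral bounds $\MZERO\prec m\MI\preccurlyeq\MH_k\preccurlyeq M\MI$ for the LBFGS approximations — this is exactly what makes both $t_k$ and $-\gamma_k/\|\Delta\Vx_k\|_2^2$ bounded below — which requires a careful analysis of the LBFGS recursion given curvature pairs $(\Vs_k,\Vy_k)$ with $\Vs_k^{T}\Vy_k>0$ and the adaptive choice of $\MH_0$ in Algorithm~\ref{alg:adaptive_hessian}. (ii) The final upgrade from subsequential optimality to convergence of the entire sequence $\{\Vx_k\}$, where the precise consequences of Assumption~\ref{as:exist_solution} must be used, since a convex function with attained minimum need not, in general, have its descent iterates converge without an additional Fej\'er-monotonicity or error-bound argument.
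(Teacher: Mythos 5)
Your proposal is correct in substance, but it takes a more self-contained route than the paper: the paper's entire proof is a two-line reduction to a cited result (Lemma~\ref{alm:global_c}, taken from \cite{lee2014proximal}), namely that exact proximal quasi-Newton with $\MH_k \succ l\MI$ converges globally for closed convex $f$ with attained infimum; the paper merely observes that continuation SCD makes the subproblem solution exact and that the adaptive-Hessian strategy keeps $\MH_k \succ l\MI$, then invokes the lemma. What you do is re-derive that lemma from first principles via the standard sufficient-decrease argument: $\gamma_k \le -\Delta\Vx_k^T\MH_k\Delta\Vx_k$, a lower bound on the backtracking step $t_k$ from the $L_1$-Lipschitz gradient, telescoping \eqref{eq:sufficient_descent_condition} to get $\sum_k\|\Delta\Vx_k\|_2^2 < \infty$, and closedness of $\partial\Psi$ to show limit points are stationary. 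This buys transparency about where the hypotheses enter, at the cost of redoing work the paper outsources. Notably, the two gaps you flag honestly are precisely the assertions the paper makes without proof: (i) uniform eigenvalue bounds on the LBFGS matrices $\MH_k$ (the paper simply states that ``the adaptive Hessian strategy keeps $\MH_k \succ l\MI$''), and (ii) the promotion from ``every limit point is optimal'' to convergence of the whole sequence, which is buried inside the citation. So neither argument is more complete at the genuinely hard points; yours just makes visible what the paper's citation hides. One small refinement: your step-size lower bound and the summability argument already give $f(\Vx_k)\to f(\Vx^*)$ along the lines of the cited lemma, which is the form of the conclusion most downstream results (e.g.\ Theorem~\ref{th:unit_step_sufficient}'s use of $\|\Delta\Vx_k\|_2\to 0$) actually need.
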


Under the stronger assumptions, we could derive the local superlinear 
convergence rate as shown in the following theorem. 
\begin{restatable}{thm}{localc}
\label{th:local_c}
Suppose $ g $ is twice-continuously differentiable and strongly convex with 
constant $ l $, and $ \nabla^2 g $ is Lipschitz continuous with constant $ L_2 
$. If~ $ \Vx_0 $ is sufficiently close to $ \Vx^* $, the sequence $ \{ \MH_k \} 
$ satisfies the Dennis-More criterion, and $ l \MI \preceq \MH_k \preceq L\MI $ 
~for some~ $ 0 < l \leq L $, then SEP-QN with the continuation SCD converges 
superlinearly after sufficiently many iterations.
\end{restatable}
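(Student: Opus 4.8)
The plan is to mirror the classical argument for proximal Newton-type methods (as in \citeauthor{lee2014proximal}), adapting it to the fact that the surrogate \eqref{eq:proximal_local_model} is solved inexactly by the continuation SCD, and that the inexactness is controlled. First I would record the optimality conditions: $\Vx^*$ minimizes $f$ iff $-\nabla g(\Vx^*)\in\partial\Psi(\Vx^*)$, and the exact proximal Newton step $\hat{\Vx}_k^{\text{exact}}$ minimizing $\hat f_k$ satisfies $-\nabla g(\Vx_k)-\MH_k(\hat{\Vx}_k^{\text{exact}}-\Vx_k)\in\partial\Psi(\hat{\Vx}_k^{\text{exact}})$. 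Using that $\partial\Psi$ is monotone and that $\MH_k\succeq l\MI$, a standard manipulation gives a contraction-type bound
\begin{equation*}
\|\hat{\Vx}_k^{\text{exact}}-\Vx^*\|\;\le\;\frac{1}{l}\,\big\|\big(\MH_k-\nabla^2 g(\Vx_k)\big)(\Vx_k-\Vx^*)\big\|\;+\;\frac{L_2}{2l}\,\|\Vx_k-\Vx^*\|^2,
\end{equation*}
where the first term is $o(\|\Vx_k-\Vx^*\|)$ by the Dennis–Moré criterion and the second is quadratically small by Lipschitz continuity of $\nabla^2 g$. This already yields superlinear convergence of the \emph{exact} step.

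Next I would bring in the inexactness. The continuation SCD produces $\hat{\Vx}_k$ with dual suboptimality driving $\|\hat{\Vx}_k-\hat{\Vx}_k^{\text{exact}}\|$ down; using the iteration-complexity bound quoted after \eqref{eq:primal_objective}, namely $\mathcal{O}\!\big(\sqrt{1/(\lambda_{\min}\epsilon)}\,\|\Vz_k^*-\Vz_{k+1}^*\|\big)$, together with the warm-start $\Vz_k^*\to\Vz_{k+1}^*$, one can make the forcing term $\|\hat{\Vx}_k-\hat{\Vx}_k^{\text{exact}}\|\le\eta_k\|\Vx_k-\Vx^*\|$ with $\eta_k\to 0$. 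Combining with the previous display,
\begin{equation*}
\|\hat{\Vx}_k-\Vx^*\|\;\le\;\big(o(1)+\eta_k\big)\|\Vx_k-\Vx^*\|\;+\;\frac{L_2}{2l}\|\Vx_k-\Vx^*\|^2,
\end{equation*}
so the \emph{full} step $\Delta\Vx_k=\hat{\Vx}_k-\Vx_k$, taken with $t_k=1$, converges superlinearly \emph{provided the unit step is eventually accepted}. For that I invoke Theorem~\ref{th:unit_step_sufficient}: since $g$ is strongly convex with constant $l$ and $\nabla^2 g$ Lipschitz, and since $l\MI\preceq\MH_k\preceq L\MI$ with the adaptive-Hessian rule keeping $(1-\alpha)\MH_k\succcurlyeq\nabla^2 g(\Vx_k)$ near the solution, the hypotheses of that theorem hold after finitely many iterations; hence $t_k=1$ satisfies \eqref{eq:sufficient_descent_condition} for $k$ large. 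Finally I would close the loop by checking that $\Vx_0$ sufficiently close to $\Vx^*$ keeps the whole sequence in the neighborhood where these estimates are valid (an induction: if $\|\Vx_k-\Vx^*\|\le r$ with $r$ small, the displayed bound gives $\|\Vx_{k+1}-\Vx^*\|\le\tfrac12\|\Vx_k-\Vx^*\|$, so the iterates stay in the ball and the superlinear rate persists).

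The main obstacle I anticipate is making the inexactness bound $\|\hat{\Vx}_k-\hat{\Vx}_k^{\text{exact}}\|\le\eta_k\|\Vx_k-\Vx^*\|$ rigorous and \emph{self-consistent} with the warm-start: the continuation-SCD complexity is stated in terms of $\|\Vz_k^*-\Vz_{k+1}^*\|$, which must in turn be bounded by something like $\|\Vx_{k+1}-\Vx_k\|$ or $\|\Vx_k-\Vx^*\|$ via the Lipschitz map $\Vz\mapsto\hat{\Vx}(\Vz)$ from \eqref{eq:scd_x_update} and its (pseudo)inverse. Establishing that the warm-started subproblem can be solved to the required accuracy in a bounded (or suitably growing) number of inner iterations — so that the outer superlinear rate is genuinely attained and not merely asymptotic per outer step — is the delicate part; the rest is bookkeeping with monotone operators and Lipschitz constants.
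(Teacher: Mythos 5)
Your proposal is correct in outline and reaches the same conclusion, but it organizes the error decomposition differently from the paper. The paper splits $\|\Vx_{k+1}-\Vx^*\|_2 \le \|\Vx_k+\Delta\Vx_k^{nt}-\Vx^*\|_2 + \|\Delta\Vx_k-\Delta\Vx_k^{nt}\|_2$, i.e.\ it uses the exact proximal \emph{Newton} step (with the true Hessian) as the reference point, quotes the quadratic convergence of proximal Newton (Theorem 3.4 of \cite{lee2014proximal}) for the first term, bounds the second via the Dennis--Mor\'e criterion as in Lee et al.'s superlinear-convergence theorem, and simply asserts that continuation SCD solves the surrogate exactly. You instead take the exact minimizer of the quasi-Newton surrogate $\hat f_k$ as the reference, prove the superlinear contraction for that step directly from monotonicity of $\partial\Psi$, $\MH_k\succeq l\MI$, and Lipschitz continuity of $\nabla^2 g$, and then add a separate forcing-sequence term $\eta_k\|\Vx_k-\Vx^*\|$ for the SCD inexactness. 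Your route is more self-contained and more honest about the inexact subproblem solves---the point you flag as delicate is exactly the one the paper waves away with ``solved exactly''---and the induction keeping the iterates in the local neighborhood is also something the paper omits. Two cautions: (i) the Dennis--Mor\'e criterion controls $(\MH_k-\nabla^2 g(\Vx^*))$ acting on the \emph{step} $\Vx_{k+1}-\Vx_k$, not on $\Vx_k-\Vx^*$, so your claim that the first term is $o(\|\Vx_k-\Vx^*\|)$ needs the standard intermediate fact that $\|\Vx_{k+1}-\Vx_k\|$ and $\|\Vx_k-\Vx^*\|$ are comparable once (at least linear) convergence is established; (ii) invoking Theorem~\ref{th:unit_step_sufficient} for eventual acceptance of the unit step requires $(1-\alpha)\MH_k\succcurlyeq\nabla^2 g(\Vx_k)$, which does not follow from $l\MI\preceq\MH_k\preceq L\MI$ alone---both you and the paper lean on the adaptive-Hessian heuristic (or on the Dennis--Mor\'e property along the step direction) at this point.
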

\begin{remark} \label{re:complexity}
Suppose SEP-QN converges within $T$ iterations. If the dataset is dense, then 
the complexity of SEP-QN is $ T\mathcal{O}(np) + 
T\mathcal{O}(\frac{1}{\epsilon_s}(Mp + \sum_{i=1}^N q_i p)) $; if the dataset 
is 
sparse, the complexity is
$ T\mathcal{O}(\textnormal{nnz}) + T\mathcal{O}(\frac{1}{\epsilon_s}(Mp + 
\sum_{i=1}^N q_i p)) $, where  $M$ is the history size of LBFGS, $ 
\epsilon_s $ is the tolerance of the problem \eqref{eq:proximal_local_model} 
and 
$ \textnormal{nnz} $ is the amount of non-zero entries in the sparse dataset.
\end{remark}

We require that $ n $ or $ \textnormal{nnz} $ are relatively large, otherwise 
the complexity of the problem \eqref{eq:proximal_local_model} will go over the 
complexity of evaluating the loss function. In this case,  it would be better 
to use some first-order methods instead of SEP-QN. If ignoring the impact of $ T 
$ and the dataset is dense, the convergence time of SEP-QN is linear with 
respect to the number of features, the amount of data size, and the number of 
non-smooth terms. We will empirically validate the scalability and extensibility 
of SEP-QN in the following section.

\section{Empirical Analysis}
\begin{figure*}[htbp]
\centering
\subfigure[epochs of $\ell_1$-logistic regression]
{\label{sfig:epochs_log}
\includegraphics[width=0.31\linewidth]{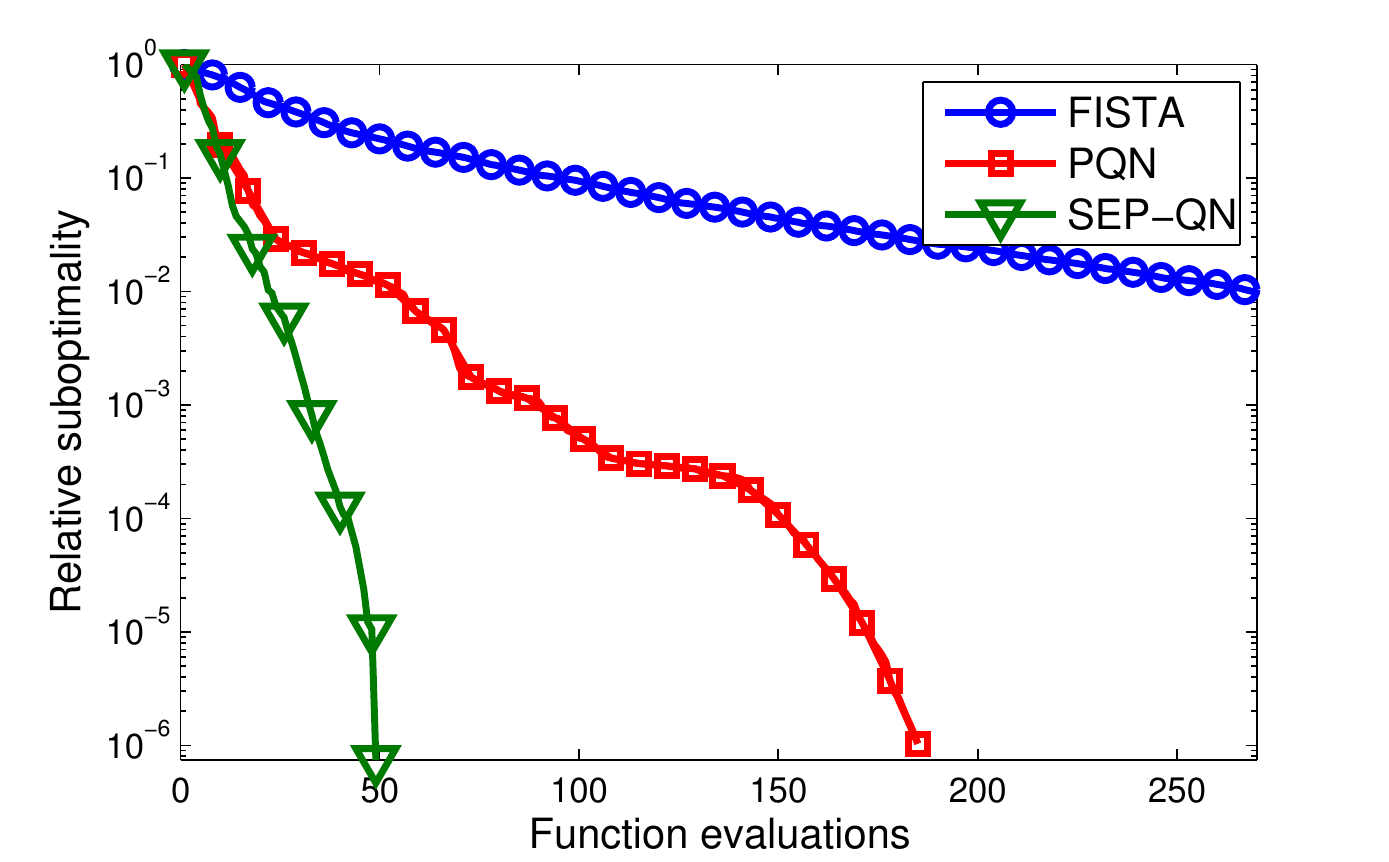}}
  \subfigure[runtime of $\ell_1$-logistic regression]
  { \label{sfig:time_log} 
\includegraphics[width=0.31\linewidth]{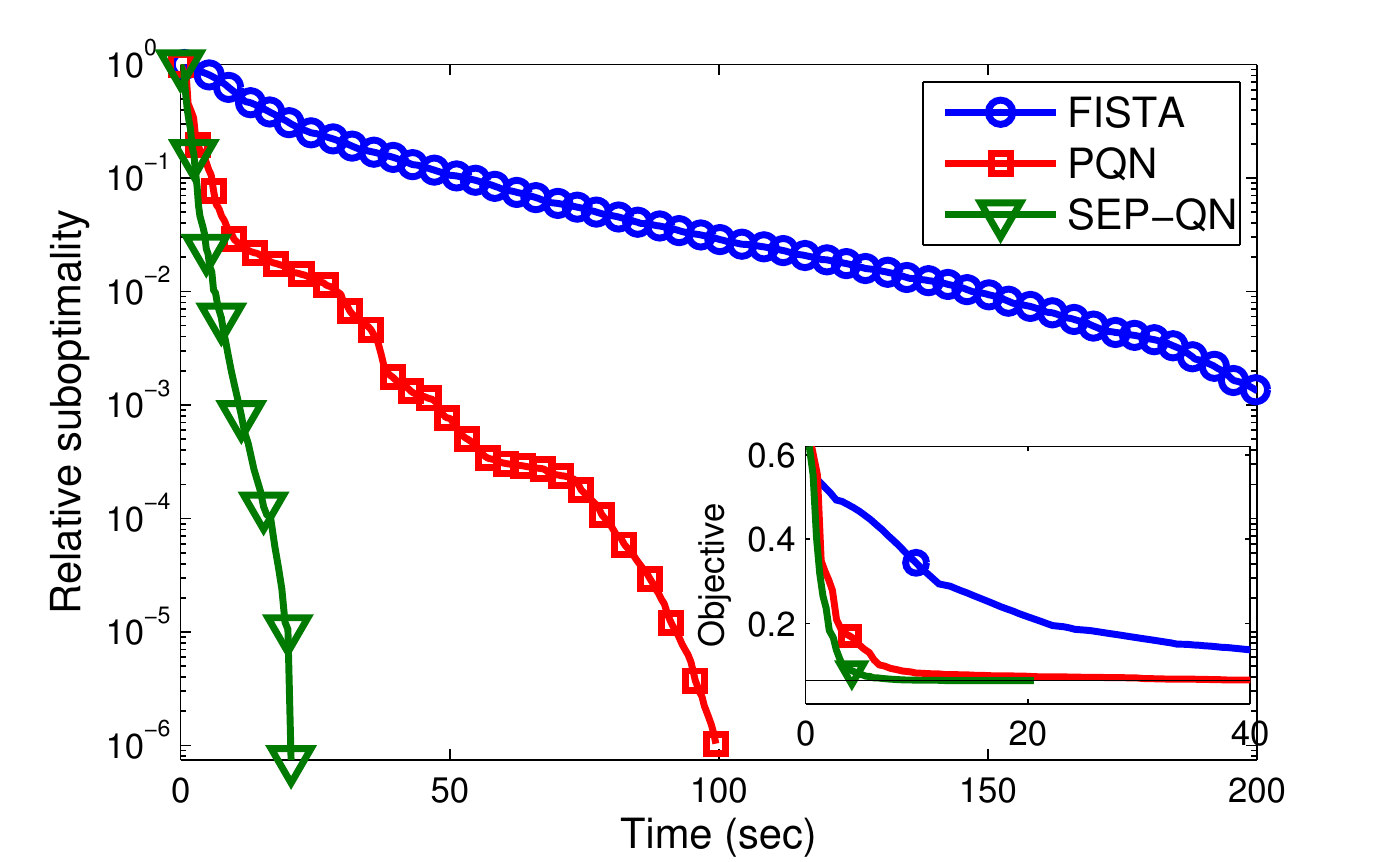}}
  \subfigure[fused $\ell_1$-logistic regression]
  { \label{sfig:fused_log} 
\includegraphics[width=0.31\linewidth]{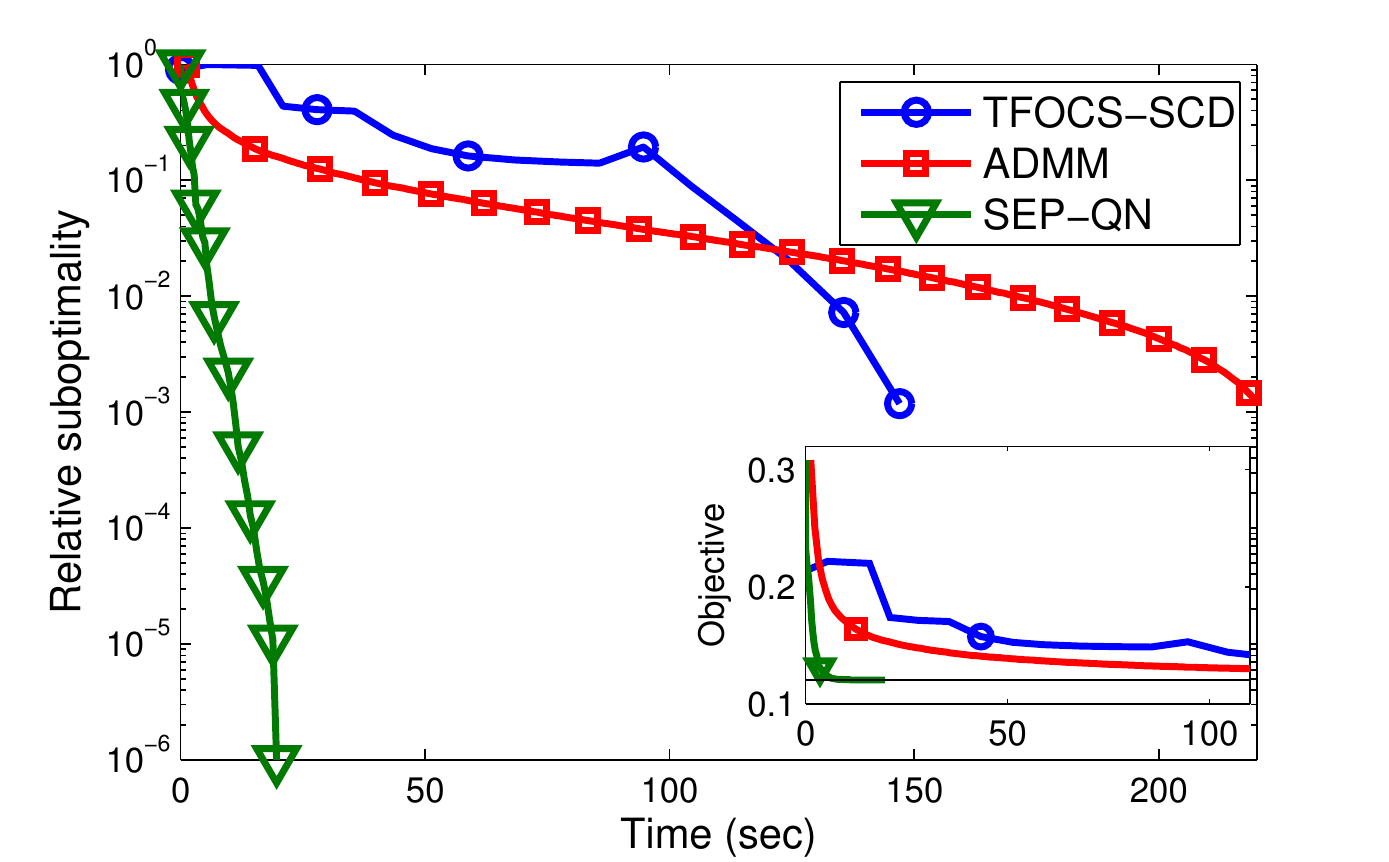}}
\caption{Convergence comparison}
\label{fig:convergence_compare}
\end{figure*}
\label{sec:exper}
We implement all the experiments on a single machine running the 64-bit version 
of Linux with an Intel Core i5-3470 CPU and 8 GB RAM. We test the SEP-QN 
framework on various real-world datasets such as \texttt{gisette} ($n= 6,000$ 
and $p= 5,000 $) and \texttt{epsilon}  ($n= 300,000$ and $p= 2,000 $) which can 
be downloaded from LIBSVM 
website\footnote{http://www.csie.ntu.edu.tw/~cjlin/libsvmtools/datasets}. 
The dataset characteristics are provided in the Table  \ref{tb:datasets}.

\begin{small}
\begin{table}[ht]
\caption{Details of the datasets in our experiments}
\label{tb:datasets} 
\begin{center}
\begin{tabular}{|c|r|r|r|r|r|}
 \hline
Dataset & p \text{\quad} & n (train) & n (test) & nnz (train) \\ \hline
epsilon     & 2,000 	& 300,000 	& 100,000 	& 600,000,000    \\
gisette     & 5,000 	& 6,000 	& 1,000 	& 29,729,997 	 \\
usps        & 649 	& 1000    	& 1000     	& 649,000 	 \\
\hline
\end{tabular}
\end{center}
\end{table}
\end{small}
\begin{figure}[ht]
\begin{center}
\includegraphics[width=0.99\linewidth]{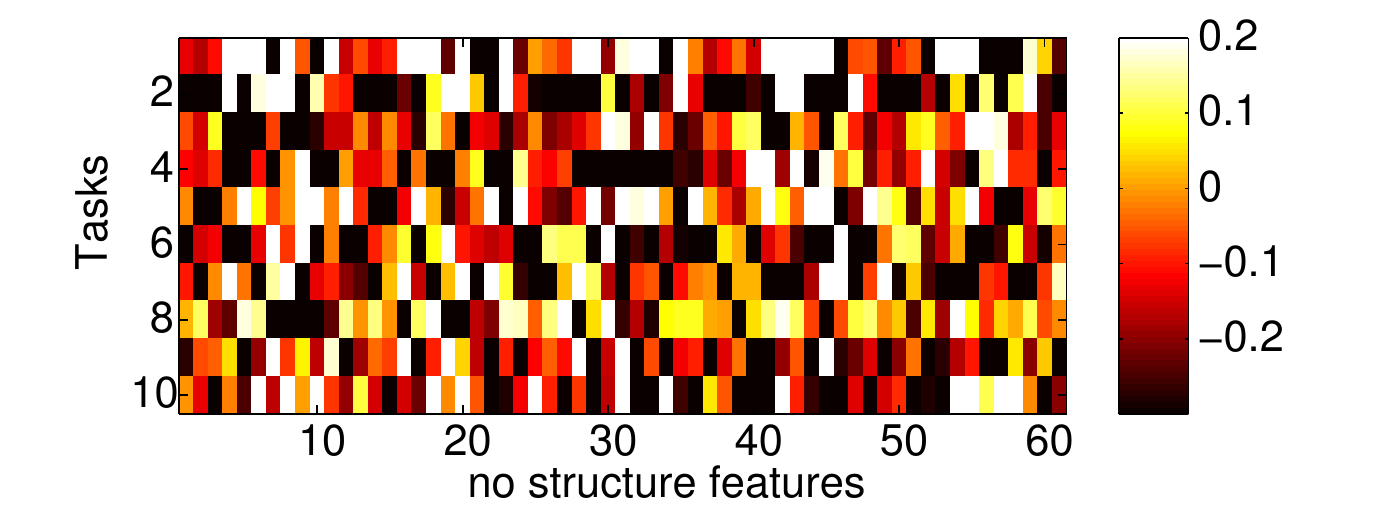}
\includegraphics[width=0.99\linewidth]{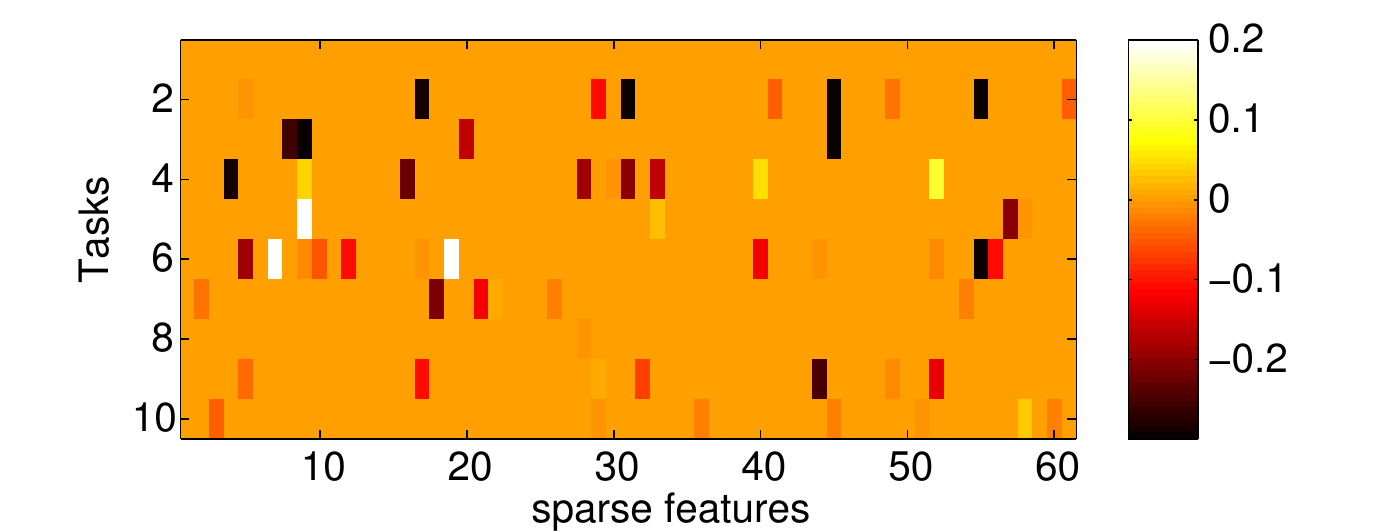}
\includegraphics[width=0.99\linewidth]{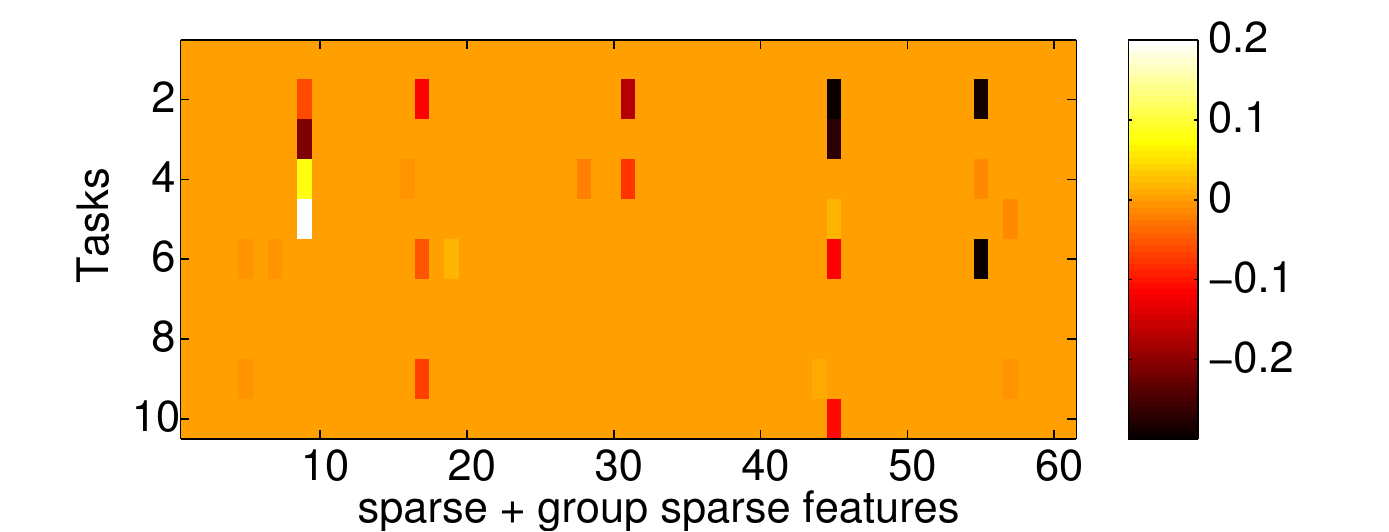}
\end{center}
\caption{Feature visualization. As shown in the colorbar, orange color 
indicates that the value of corresponding feature is 0.}
\label{fig:feature_visualization}
\end{figure}

\subsection{``Superposition-structured'' Logistic Regression}

We consider the ``superposition-structured'' logistic regression problem:
\[
 \underset{\Vx \in \SetR^p } {\min} \; \frac{1}{n} \sum_{i=1}^n {\log}(1+ 
{\exp}(-y_i {\bf a}_i^T \Vx))
 + \lambda \| \Vx \|_1 + \gamma \|\MW \Vx \|_q.
\]

We first set $\MW = \mathbf{0} $ for comparison with the result of PQN in 
\cite{lee2012proximal}. For fairness of comparison, we use the same dataset 
\texttt{gisette}  and the same setting of the tuning parameter  $ \lambda $  as 
\cite{lee2012proximal,yuan2012improved}.
The results are shown in the Figures \ref{sfig:epochs_log} and 
\ref{sfig:time_log}. We can see that the SEP-QN method has the fastest 
convergence rate,  which agrees with Theorems \ref{th:gloal_c} and 
\ref{th:local_c}.

In order to verify the effectiveness and efficiency when the model has multiple
structural constraints.We compare SEP-QN with ADMM and the direct SCD in TFOCS 
\cite{becker2012tfocs} on the fused sparse logistic regression by setting $ \| 
\MW \Vx \|_q = \| \Vx \|_{TV} $ and $ \gamma = \lambda $. Figure 
\ref{sfig:fused_log} shows that the three algorithms converge to the same 
optimal value, but SEP-QN performs much better.

\begin{table*}[htbp]
\centering
\caption{The comparisons on multi-task problems.}
\label{tb:multitask} 
\begin{tabular}{|c|l|c|c|c|c|c|}
\hline
\multirow{2}{*}{n}   & 
\multicolumn{1}{c|}{\multirow{2}{*}{\begin{tabular}[c]{@{}c@{}}relative\\  
error\end{tabular}}} & \multicolumn{3}{c|}{\begin{tabular}[c]{@{}c@{}}sparse + 
group sparse\\ (test error rate / training time )\end{tabular}} & 
\multicolumn{2}{c|}{Other Models}               \\ \cline{3-7} 
                     & \multicolumn{1}{c|}{}                                    
 
                                      & SEP-QN                                & 
QUIC \& DIRTY                         & ADMM                                  & 
Lasso                  & Group Lasso            \\ \hline
\multirow{2}{*}{100} &                                                          
 
                        $10^{-1}$       & 7.3\% / 0.32s                    
     & 8.3\% / 0.42s                         & 8.3\% / 1.5s                     
     & \multirow{2}{*}{7.9\%} & \multirow{2}{*}{7.4\%} \\ \cline{2-5}
                    &                                                  
                       $10^{-4}$                         & 6.4\% / 0.93s        
 
                & 7.4\% / 0.75s                         & 7.5\% / 4.3s          
 
               &                        &                        \\ \hline
\multirow{2}{*}{400} &                                                          
 
                           $10^{-1}$            & 3.0\% / 1.2s                  
        & 2.9\% / 1.01s                         & 3.0\% / 3.6s                  
 
       & \multirow{2}{*}{3.0\%} & \multirow{2}{*}{3.1\%} \\ \cline{2-5}
                     &                                                          
 
                           $10^{-4}$            & 2.6\% / 2.0s                  
 
       & 2.5\% / 1.55s                         & 2.6\% / 11.0s                  
 
      &                        &                        \\ \hline
\end{tabular}
\end{table*} 
\begin{figure*}[htbp]
\begin{center}
\begin{subfigure}
  [feature-number scalability]
  { \label{sfig:feature} 
\includegraphics[width=0.32\linewidth]{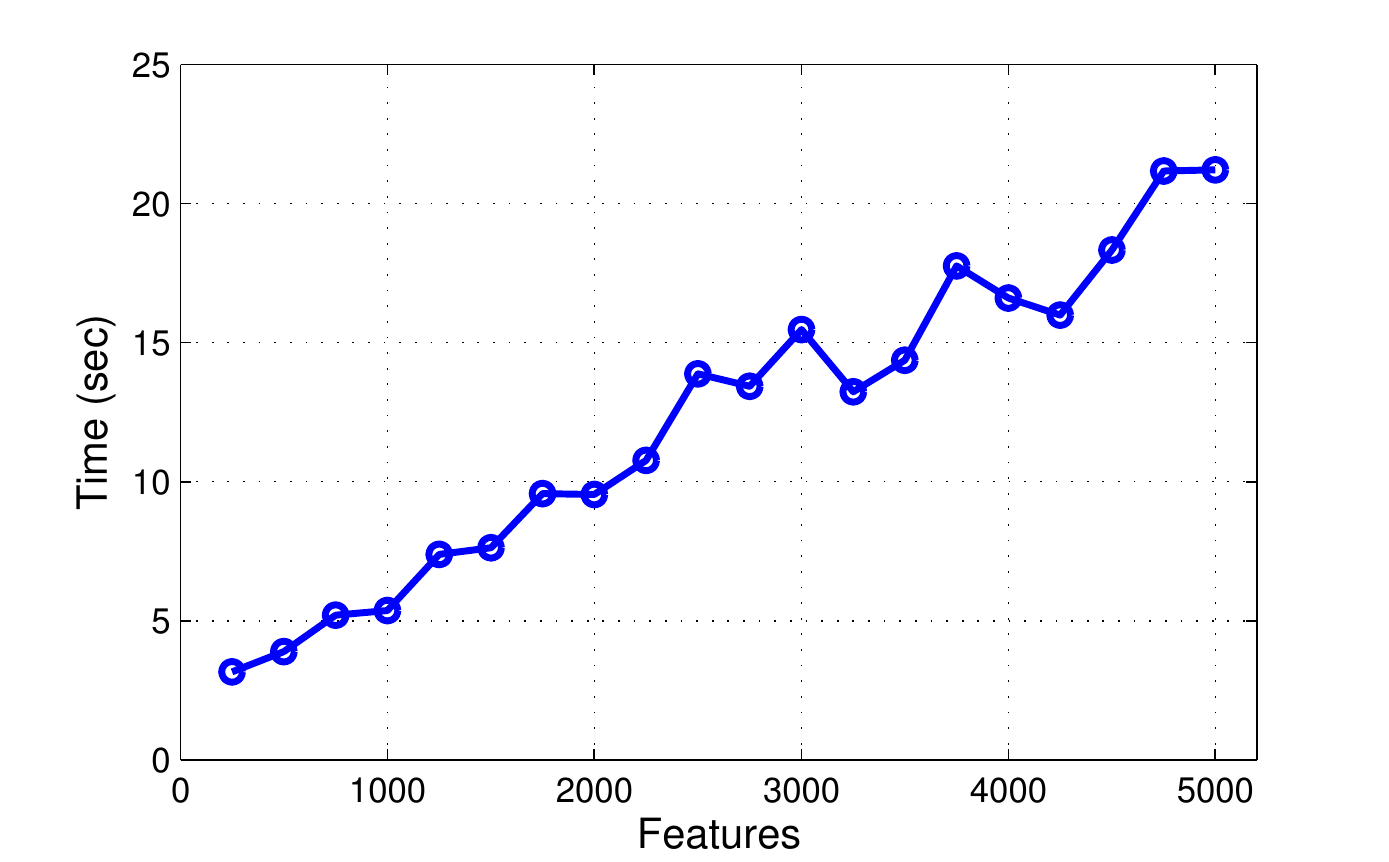}}
\end{subfigure}%
\begin{subfigure} 
  [data-size scalability]
  { \label{sfig:datasize} 
\includegraphics[width=0.32\linewidth]{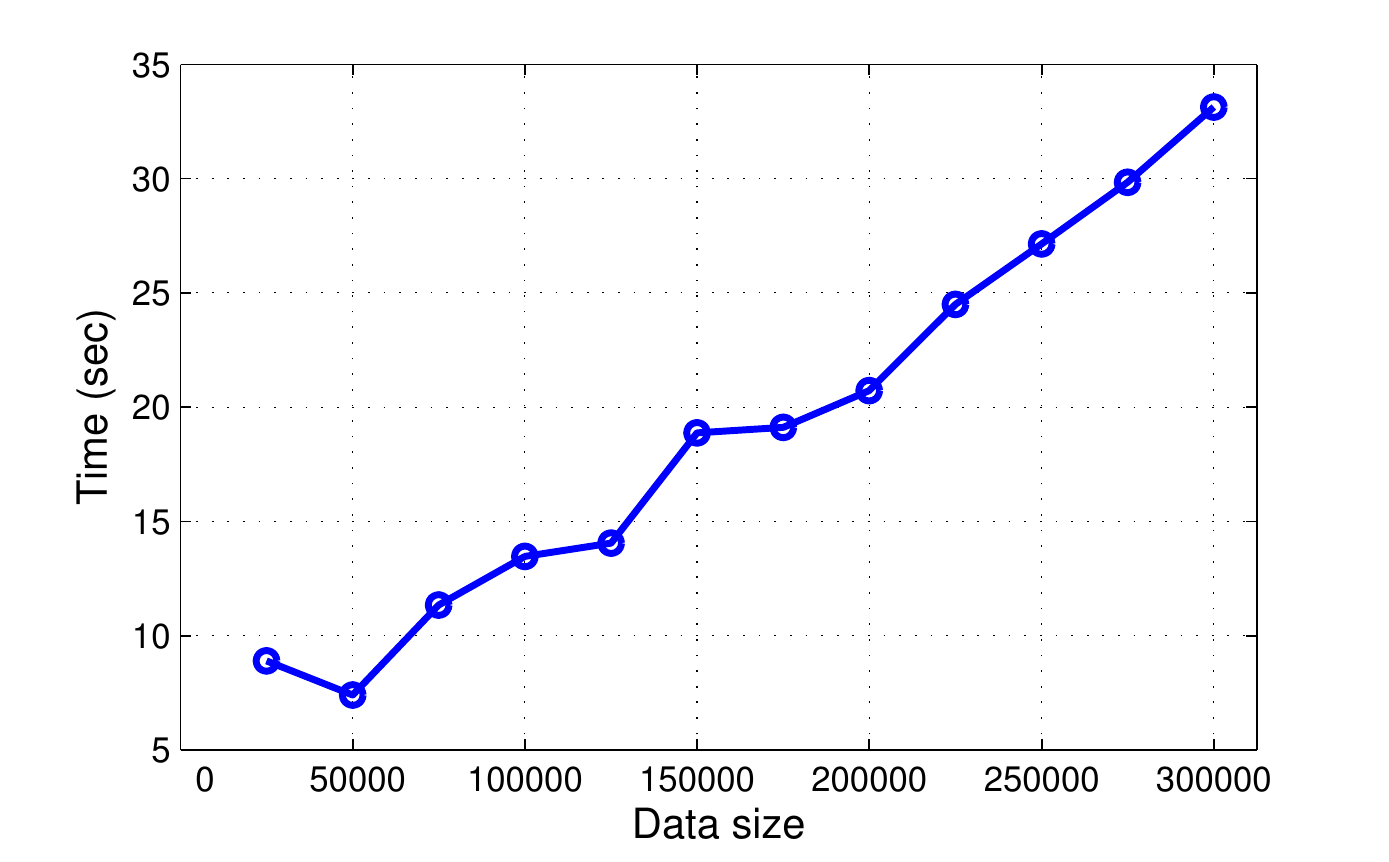}}
\end{subfigure}%
\begin{subfigure}
  [nonsmooth-terms extensibility]
  { \label{sfig:nonsmooth} 
\includegraphics[width=0.32\linewidth]{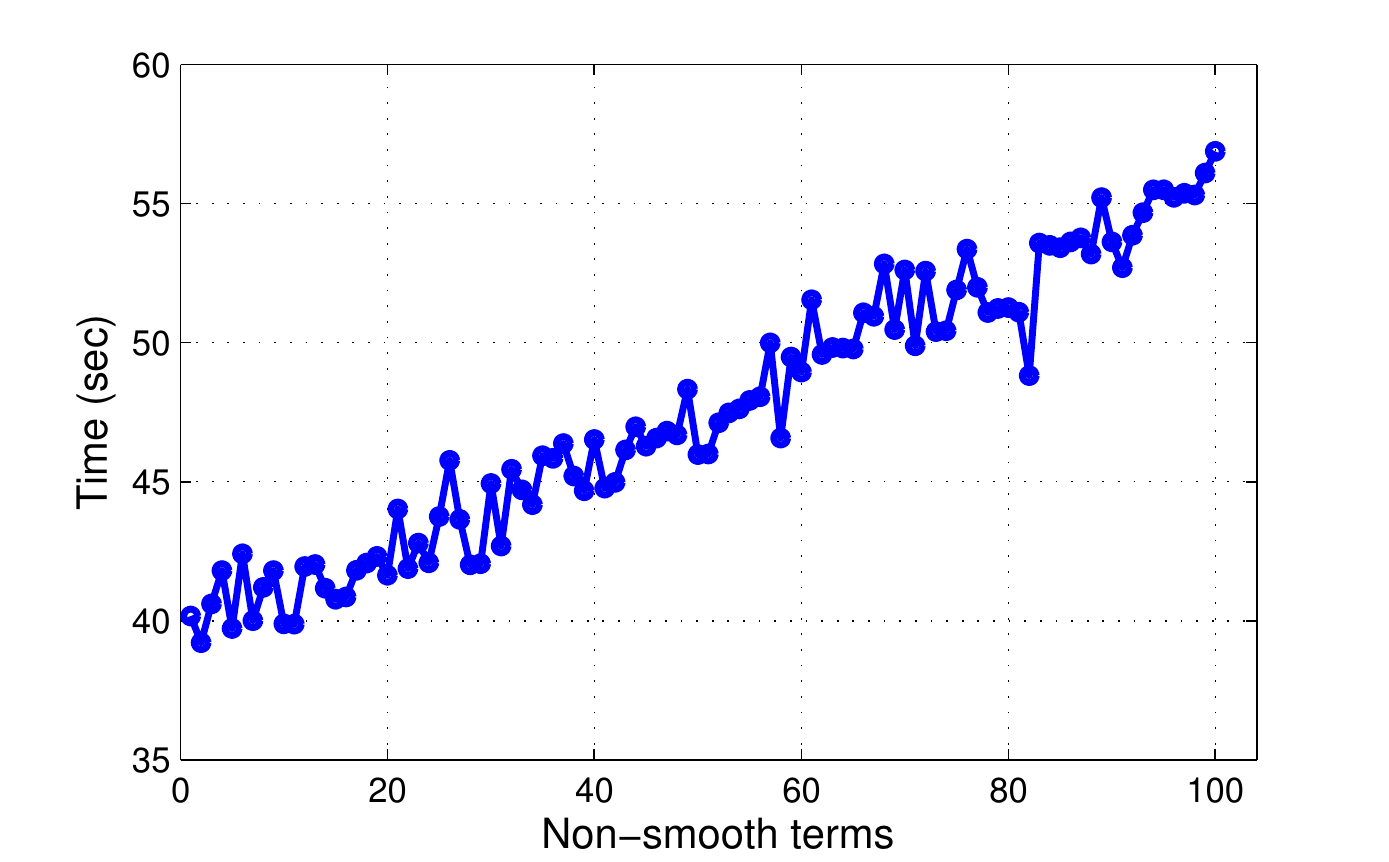}}
\end{subfigure}%
\end{center}
\caption{Scalability and extensibility}
\label{fig:scalability}
\end{figure*}

\subsection{Multi-task Learning}
Next we solve the multi-task learning problem  where the parameter matrix $ 
\MX $ will have a sparse + group sparse structure. In our framework, there is 
no need to seperate the parameter matrix $\MX$ into $ \MS + \MB $ as in 
\cite{jalali2010dirty,hsieh2014quic}. Instead of using the 
square loss (as in \cite{jalali2010dirty}), we consider the logistic loss, 
which gives better performance. Thus, $ \MX $ could be estimated by the following 
objective function,
\begin{small}
\begin{align*} 
	\min_{\MX \in \SetR^{p \times r}} \sum_{k=1}^r 
l_{logistic}(\Vy^{(k)}, \MA^{(k)}\MX^{(k)} ) 
+ \lambda \| \MX \|_1 + \gamma \| \MX \|_{1,2}.
\end{align*}
\end{small}
We follow \cite{jalali2010dirty,hsieh2014quic} 
and transform multi-class problems into multi-task problems. For fairness 
of comparison, we test on the same dataset USPS which was first collected in 
\cite{van1998handwritten} and subsequently widely used in multi-task papers as a 
reliable dataset for handwritten recognition algorithms. There are  $r = 10$ 
tasks, and each handwritten sample consists of $p = 649$ features.
In \cite{jalali2010dirty,hsieh2014quic}, the authors demonstrated that on 
USPS, using 
sparse and group sparse regularizations together outperforms the models with a 
single regularizer.

We  visualize features that estimated by our SEP-QN framework in Figure 
\ref{fig:feature_visualization}, and we just plot the first sixty features to 
provide a clear visualization. Figure \ref{fig:feature_visualization} shows that the 
feature structure is well maintained by the regularizer. The promising results of 
``sparse + group sparse structure" further validate the effectiveness of our 
SEP-QN framework. As shown in Table \ref{tb:multitask}, our  SEP-QN framework 
is comparable to QUIC \& DIRTY which is the state-of-art method.
Unlike QUIS \& DIRTY, our implementation is straightforward in the SEP-QN 
framework. Because of broad interest of our framework, it may be slower than
QUIC \& DIRTY on some specifical datasets. However, we will show that our framework is 
scalable by the experiments in the following section.

\subsection{Scalability and Extensibility}

We consider the group generalized lasso problem
\cite{tibshirani2005sparsity,simon2013sparse,meier2008group}, 
but use the logistic loss function  instead for classification. Specifically,
\begin{align}
\underset{\Vx \in \SetR^p } {\min} & \;
\frac{1}{n} \sum_{i=1}^n {\log}(1+ {\exp}(-y_i {\bf a}_i^T \Vx))  \nonumber \\
& + \lambda_1 \|\Vx \|_1 + \lambda_2 \|\MF \Vx \|_1 + \sum_{j=1}^{N-2} \gamma_j 
\|\MG_j \Vx \|_2. \nonumber
\end{align}
As far as we know, there is no an efficient algorithm to solve this model. Note 
that this dirty model may not be a good choice for  \texttt{gisette} and 
\texttt{epsilon} datasets.  We just use this model to validate the scalability 
and extensibility of SEP-QN framework. 

We use the fused sparse logistic regression ($ \lambda_1 = \frac{2}{n}, 
\lambda_2 = \frac{2}{n}, N = 2  $) and \texttt{gisette} dataset to test the 
feature-number scalability of SEP-QN as shown in Figure \ref{sfig:feature}. 
Then we test the data-size scalability on \texttt{epsilon} dataset as shown in 
Figure \ref{sfig:datasize}. We can see that the convergence time is linear with 
respect to the number of features as well as the amount of data.

Then we use the group sparse logistic model ($ \lambda_1 = \frac{2}{n}, 
\lambda_2 = 0, \gamma_j = \frac{2}{n}  $) and \texttt{epsilon} dataset to test 
the nonsmooth-terms extensibility of SEP-QN. As shown in Figure 
\ref{sfig:nonsmooth}, the convergence time is linear with respect to the number 
of non-smooth terms. These experiments further verify  Remark 
\ref{re:complexity} under the assumptions.

\section{Conclusion}
\label{sec:conclusion}

In this paper, we have generalized  the proximal quasi-Newton method to handle 
``superposition-structured'' statistical models and devised a SEP-QN framework.
With the help of the SCD approach and  LBFGS updating formula, we can 
solve the surrogate problem in an efficient and feasible way. We have explored 
the global convergence and the super-linear convergence both theoretically 
and empirically. Compared with prior methods, SEP-QN converges significantly 
faster and scales much better, and the promising experimental 
results on  several real-world datasets have further validated the scalability 
and extensibility of the SEP-QN framework.

\clearpage
\bibliographystyle{aaai}
\bibliography{reference}

\newpage
\appendixtitleon
\appendixtitletocon
\begin{appendices}

\setcounter{equation}{0}
\renewcommand\theequation{\Alph{section}.\arabic{equation}}
\newtheorem{athm}{Theorem}[section]
\newtheorem{alem}[athm]{Lemma}

\section{}
\subsection{Proof of Theorem \ref{th:unit_step_sufficient}}
\begin{alem} \label{lm:small_than_H}
If ~$ \MH_k $ is positive definite, then $ \Delta \Vx_k $ satisfies
$$ \nabla g(\Vx_k)^T \Delta \Vx_k + \Psi(\Vx_k + \Delta \Vx_k) - \Psi(\Vx_k) 
\leq -\Delta \Vx_k^T \MH_k \Delta \Vx_k $$ \end{alem}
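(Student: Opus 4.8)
The plan is to use that $\Delta\Vx_k = \hat{\Vx}_k - \Vx_k$ is the \emph{exact} minimizer of the surrogate $\hat{f}_k$ in \eqref{eq:proximal_local_model_expend} together with the convexity of $\Psi$. Discarding the constant $g(\Vx_k)$, it is convenient to write
\[
	h(\Vx) \triangleq \nabla g(\Vx_k)^T(\Vx-\Vx_k) + \tfrac{1}{2}(\Vx-\Vx_k)^T\MH_k(\Vx-\Vx_k) + \Psi(\Vx),
\]
so that $\hat{\Vx}_k$ minimizes $h$ over $\SetR^p$. A one-line comparison $h(\hat{\Vx}_k)\le h(\Vx_k)$ only yields the weaker estimate with $-\tfrac12\Delta\Vx_k^T\MH_k\Delta\Vx_k$ on the right-hand side, so the point of the proof is to recover the full coefficient by an interpolation argument along the segment $[\Vx_k,\hat{\Vx}_k]$.

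First I would restrict $h$ to that segment, setting $p(t)\triangleq h(\Vx_k+t\Delta\Vx_k)$ for $t\in[0,1]$. Writing $\Vx_k+t\Delta\Vx_k = (1-t)\Vx_k + t(\Vx_k+\Delta\Vx_k)$ and invoking convexity of $\Psi$ gives
\[
	p(t) \le t\,\nabla g(\Vx_k)^T\Delta\Vx_k + \tfrac{t^2}{2}\Delta\Vx_k^T\MH_k\Delta\Vx_k + (1-t)\Psi(\Vx_k) + t\,\Psi(\Vx_k+\Delta\Vx_k).
\]
Since $\hat{\Vx}_k$ is the global minimizer of $h$, we have $p(1)\le p(t)$ for every $t\in[0,1]$; subtracting, collecting terms, and dividing by $1-t>0$ for $t<1$ (using $\tfrac{1-t^2}{1-t}=1+t$) I obtain
\[
	\nabla g(\Vx_k)^T\Delta\Vx_k + \tfrac{1+t}{2}\Delta\Vx_k^T\MH_k\Delta\Vx_k + \Psi(\Vx_k+\Delta\Vx_k) - \Psi(\Vx_k) \le 0 .
\]
Letting $t\uparrow 1$ yields exactly the claimed inequality $\nabla g(\Vx_k)^T\Delta\Vx_k + \Psi(\Vx_k+\Delta\Vx_k) - \Psi(\Vx_k) \le -\Delta\Vx_k^T\MH_k\Delta\Vx_k$.

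The one delicate point is that $\Psi$ is non-smooth, so one cannot simply differentiate $p$ at $t=1$ to read off the optimality condition; the convexity bound on $\Psi$ along the segment, combined with the limit $t\uparrow 1$, is precisely what sidesteps this. Positive definiteness of $\MH_k$ is needed only to ensure that $h$ has a unique minimizer, so that $\hat{\Vx}_k$ (hence $\Delta\Vx_k$) is well defined; once that is granted, the inequality follows from optimality of $\hat{\Vx}_k$ and convexity of $\Psi$ alone, and I expect no further obstacle.
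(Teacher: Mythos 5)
Your proof is correct. The paper does not prove this lemma itself but defers to Lee et al.\ (2014), and your argument --- comparing $h(\hat{\Vx}_k)\le h(\Vx_k+t\Delta\Vx_k)$ along the segment, bounding $\Psi$ by convexity, dividing by $1-t$, and letting $t\uparrow 1$ to upgrade the coefficient from $\tfrac12$ to $1$ --- is exactly the standard argument given there, so there is nothing to add.
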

The proof of this lemma is shown in \cite{lee2014proximal}.

\stepsufficient*
\begin{proof}
By Lemma \ref{lm:small_than_H}, we have
{\small
\begin{align}
 & \nabla g(\Vx_k)^T \Delta \Vx_k + \Psi(\Vx_k + \Delta \Vx_k) - 
\Psi(\Vx_k) + \frac{1}{2}\Delta \Vx_k^T  \MH_k \Delta \Vx_k \nonumber \\
& \leq -\frac{1}{2}\Delta \Vx_k^T \MH_k \Delta \Vx_k \leq 0. \nonumber
 \end{align}}
Since $ \gamma_k = \nabla g(\Vx_k)^T\Delta \Vx_k  + \Psi(\Vx_k+\Delta \Vx_k) - 
\Psi(\Vx_k) $ (the sufficient descent condition
\eqref{eq:sufficient_descent_condition}),
we have
 {\small\begin{align} \label{eq:T2B1}
 & \nabla g(\Vx_k)^T \Delta \Vx_k + \Psi(\Vx_k + \Delta \Vx_k) - \Psi(\Vx_k)
 + \frac{1-\alpha}{2}\Delta \Vx_k^T \MH_k \Delta \Vx_k \nonumber \\
 &\leq \alpha \gamma_k  
\nonumber \\ & g(\Vx_k) + \nabla g(\Vx_k)^T \Delta \Vx_k + \Psi(\Vx_k + \Delta 
\Vx_k) + \frac{1-\alpha}{2}\Delta \Vx_k^T  \MH_k \Delta \Vx_k \nonumber \\ 
&\leq g(\Vx_k) + \Psi(\Vx_k) + \alpha  \gamma_k.
\end{align}
}
Since $ \nabla^2 g $ is Lipschitz continuous with constant $ L_2 $, the smooth 
part $ g(x)$ can be expanded in Taylor's series as following
{\small\begin{align} \label{eq:T2B2}
 f(\Vx_k  + \Delta \Vx_k) & =  g(\Vx_k) + \nabla g(\Vx_k)^T \Delta \Vx_k
 + \frac{1}{2}\Delta \Vx_k^T \nabla^2 g(\Vx_k) \Delta \Vx_k \nonumber \\
 & +o(\Delta \Vx_k^2)+ \Psi(\Vx_k + \Delta \Vx_k) \nonumber \\
& \leq  g(\Vx_k) + \nabla g(\Vx_k)^T \Delta \Vx_k
 + \frac{1}{2}\Delta \Vx_k^T \nabla^2 g(\Vx_k) \Delta \Vx_k \nonumber \\
& +\frac{L_2}{6}\| \Delta \Vx_k\|_2^3 + \Psi(\Vx_k + \Delta \Vx_k).
\end{align}}
Since $ \MH_k \succcurlyeq m\MI$, from Lemma \ref{lm:small_than_H}, we have
\begin{equation} \label{eq:T2B3}
 \| \Delta \Vx_k \|_2^2 \leq -\frac{\gamma_k}{m}.
\end{equation}

Because $  (1-\alpha)\MH_k \succ \nabla^2 g(\Vx_k) $, we use the results in
\eqref{eq:T2B1}, \eqref{eq:T2B2} and \eqref{eq:T2B3} to yield
\begin{align}
 f( \Vx_k+\Delta \Vx_k) & \leq g(\Vx_k) + \nabla g(\Vx_k)^T \Delta \Vx_k +
\Psi(\Vx_k + \Delta \Vx_k) + \nonumber \\
&\frac{1-\alpha}{2}\Delta \Vx_k^T \MH_k \Delta
\Vx_k + \frac{L_2}{6}\| \Delta \Vx_k\|_2^3 \nonumber \\
& \leq f(\Vx_k) + \alpha  \gamma_k + \frac{L_2}{6}\| \Delta \Vx_k\|_2^3 
\nonumber \\
& \leq f(\Vx_k) + \alpha  \gamma_k - \frac{L_2}{6m}\| \Delta \Vx_k\|_2 \gamma_k.
\nonumber
\end{align}
We can show that $ \|\Delta \Vx_k\|_2 $ converges to zero via Theorem
\ref{th:gloal_c}. Hence, for $k$ sufficiently large, the unit
step length satisfies the sufficient descent condition
\eqref{eq:sufficient_descent_condition}.
\end{proof}

\subsection{Proof of Theorem \ref{th:larger_H0}}
\largeH*
\begin{proof}
By assumptions $ \Vs_k^T\Vy_k > 0 $ and $ H_0 \succ \MZERO $, we can prove this
result using the BFGS updating formula,
\[
\MH_{k+1}^{-1} = (\MI - \frac{\Vs_k \Vy_k^T}{\Vy_k^T \Vs_k}) \MH_k^{-1} (\MI -
\frac{\Vy_k \Vs_k^T}{\Vy_k^T \Vs_k}) + \frac{\Vs_k \Vs_k^T}{\Vy_k^T \Vs_k}.
\]
By reduction, if $ \MH_k^a \succ \MH_k^b \succ \MZERO $, $$ (\MH_{k+1}^b)^{-1} 
- 
(\MH_{k+1}^a)^{-1} =  (\MI - \frac{\Vs_k \Vy_k^T}{\Vy_k^T \Vs_k}) 
((\MH_k^b)^{-1} - (\MH_k^a)^{-1}) (\MI - \frac{\Vy_k \Vs_k^T}{\Vy_k^T \Vs_k}) 
$$ 
is positive definite when $ \Vs_k^T \Vy_k > 0$, then $ \MH_{k+1}^a \succ 
\MH_{k+1}^b   $, so with
larger initial $ \MH_0 $, we could obtain larger $ \MH_k $.
\end{proof}

\subsection{Proof of Theorem \ref{th:gloal_c}}
\begin{alem} \label{alm:global_c}
Suppose $ f $ is a closed convex function and $ \underset{\Vx} {\inf}\{f(\Vx) | 
\Vx \in \dom f\} $ is attained at some $\Vx^*$, If $ \MH_k \succ l\MI$ for some 
$ l > 0 $ and the surrogate problem \eqref{eq:proximal_local_model} is solved 
exactly in proximal quasi-Newton method, then $ \Vx_k $ converges to an optimal 
solution starting at any $ \Vx_0 \in \dom f$.
\end{alem}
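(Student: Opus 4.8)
The plan is to run the classical global-convergence argument for monotone descent methods, specialized to the exact proximal quasi-Newton iteration, essentially in the style of \cite{lee2014proximal}. The ingredients are: $\Delta\Vx_k$ is a descent direction, the backtracking step is bounded away from zero, the objective values are summably decreasing so $\Delta\Vx_k\to\MZERO$, and then a limiting argument shows any accumulation point is optimal.

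First I would verify that $\Delta\Vx_k = \hat\Vx_k - \Vx_k$ is a bona fide descent direction. Because \eqref{eq:proximal_local_model} is solved exactly, $\hat\Vx_k$ satisfies the optimality inclusion $-\nabla g(\Vx_k) - \MH_k\Delta\Vx_k \in \partial\Psi(\hat\Vx_k)$, and Lemma~\ref{lm:small_than_H} gives $\gamma_k \le -\Delta\Vx_k^T \MH_k \Delta\Vx_k \le -l\|\Delta\Vx_k\|_2^2 \le 0$, with equality throughout exactly when $\Delta\Vx_k = \MZERO$; in that case the inclusion collapses to $0 \in \nabla g(\Vx_k) + \partial\Psi(\Vx_k)$, i.e.\ $\Vx_k$ is already optimal and we are done. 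So we may assume $\gamma_k < 0$ for all $k$.

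Next I would show the backtracking line search terminates with a step bounded below. Convexity of $\Psi$ gives $\Psi(\Vx_k + t\Delta\Vx_k) \le (1-t)\Psi(\Vx_k) + t\Psi(\hat\Vx_k)$ for $t\in[0,1]$, and combining this with $L_1$-Lipschitz continuity of $\nabla g$ yields $f(\Vx_k + t\Delta\Vx_k) \le f(\Vx_k) + t\gamma_k + \tfrac{L_1 t^2}{2}\|\Delta\Vx_k\|_2^2$; comparing with the target $f(\Vx_k) + \alpha t\gamma_k$ and using $-\gamma_k \ge l\|\Delta\Vx_k\|_2^2$ shows \eqref{eq:sufficient_descent_condition} holds whenever $t \le \tfrac{2(1-\alpha)l}{L_1}$, so backtracking with factor $\beta$ stops at some $t_k \ge t_{\min} := \min\{1,\ \tfrac{2\beta(1-\alpha)l}{L_1}\} > 0$. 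Plugging this into \eqref{eq:sufficient_descent_condition}, $\{f(\Vx_k)\}$ is nonincreasing and bounded below by $f(\Vx^*)$ (Assumption~\ref{as:exist_solution}), hence convergent; telescoping gives $\sum_k t_k(-\gamma_k) < \infty$, and since $t_k \ge t_{\min}$ and $-\gamma_k \ge l\|\Delta\Vx_k\|_2^2$ we get $\sum_k \|\Delta\Vx_k\|_2^2 < \infty$, so $\Delta\Vx_k \to \MZERO$ and $\hat\Vx_k - \Vx_k \to \MZERO$.

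The last step, which I expect to be the main obstacle, is to pass to the limit. Along any convergent subsequence $\Vx_{k_j}\to\bar\Vx$ we also have $\hat\Vx_{k_j}\to\bar\Vx$; taking limits in $-\nabla g(\Vx_{k_j}) - \MH_{k_j}\Delta\Vx_{k_j} \in \partial\Psi(\hat\Vx_{k_j})$ using continuity of $\nabla g$, the fact that $\MH_{k_j}\Delta\Vx_{k_j}\to\MZERO$, and outer semicontinuity of $\partial\Psi$ gives $-\nabla g(\bar\Vx)\in\partial\Psi(\bar\Vx)$, so $\bar\Vx$ minimizes $f$; with convergence of $\{f(\Vx_k)\}$ this forces $f(\Vx_k)\to f(\Vx^*)$, and a Fej\'er-type monotonicity argument then upgrades this to convergence of $\{\Vx_k\}$ to a point in the solution set. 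Two points need care here: extracting a convergent subsequence requires boundedness of $\{\Vx_k\}$, obtained from the sublevel set $\{f \le f(\Vx_0)\}$ or from comparing $\hat f_k$ at $(1-s)\Vx_k + s\Vx^*$; and the step $\MH_{k_j}\Delta\Vx_{k_j}\to\MZERO$ needs a uniform upper bound $\MH_k \preceq L\MI$ in addition to the stated $\MH_k \succ l\MI$ — this is the bound the LBFGS scheme maintains and that is also invoked in Theorem~\ref{th:local_c}, so it is available.
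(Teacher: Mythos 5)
Your proof is correct and follows essentially the same route as the source the paper relies on: the paper gives no proof of this lemma at all, deferring entirely to \cite{lee2014proximal}, and your argument is a faithful (indeed more careful) reconstruction of that reference's global-convergence proof — descent via Lemma~\ref{lm:small_than_H}, a backtracking step bounded below by $\Theta(l/L_1)$, summability of $\|\Delta \Vx_k\|_2^2$, and a limiting argument on the optimality inclusion. The two caveats you flag yourself (boundedness of $\{\Vx_k\}$, which does not follow from closedness of sublevel sets alone, and the need for an upper bound $\MH_k \preceq L\MI$ in the limiting step) are present in the cited argument as well, so making them explicit only strengthens your version relative to what the paper provides.
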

The proof of this Lemma is shown in \cite{lee2014proximal}.
\globalc*
\begin{proof}
From Algorithm \ref{alg:sub_scd_solver}, SEP-QN use SCD to solve the local
proximal of the composite functions, and the adaptive Hessian strategy
keep $ \MH_k \succ l\MI $~ for some $ l > 0 $. By continuation SCD, the 
surrogate problem \eqref{eq:proximal_local_model}
would be solved exactly. Based on Lemma \ref{alm:global_c} and Assumption 
\ref{as:exist_solution}, $ \Vx_k $ converges to an optimal solution $ \Vx^* $ 
starting at any $ \Vx_0 \in \dom f$.
\end{proof}

\subsection{Proof of Theorem \ref{th:local_c}}
\localc*
\begin{proof}
 After sufficiently many iterations, $ \beta \approx 1 $. Under the Dennis-More 
criterion, we can show that the unit step length satisfied the sufficient 
descent condition \eqref{eq:sufficient_descent_condition} via the argument used 
in the proof of Theorem \ref{th:unit_step_sufficient}.
 Then we have,
\begin{align} \label{eq:less_newton}
 \| Vx_{k+1} - \Vx^* \|_2 & = \| \Vx_k + \Delta \Vx_k - \Vx^* \|_2 
\nonumber \\
& = \| \Vx_k + \Delta \Vx_k + \Delta \Vx_k^{nt} - \Delta \Vx_k^{nt} - \Vx^* 
\|_2 \nonumber \\
& \leq \| \Vx_k + \Delta \Vx_k^{nt} - \Vx^* \|_2 + \| \Delta \Vx_k  - \Delta 
\Vx_k^{nt} \|_2
\end{align}
According to Theorem 3.4 in \cite{lee2014proximal}, the proximal Newton method 
convergence quadraticly, that is,
\begin{align} \label{eq:total_ie}
 \| \Vx_{k+1}^{nt} - \Vx^* \|_2 & \leq \frac{L_2}{2l} \| \Vx_k^{nt} - \Vx^* 
\|_2^2
\end{align}
By continuation SCD, the surrogate problem 
\eqref{eq:proximal_local_model} would be solved exactly. We draw the same 
conclusion as \cite{lee2014proximal}, that is,
\begin{align} \label{eq:delta_ie}
 & \| \Delta \Vx_k  - \Delta \Vx_k^{nt} \|_2 \leq c_1  \|  \Vx_k - \Vx^* 
\|_2^{\frac{1}{2}}\| \Delta \Vx_k \|_2 + o(\| \Delta \Vx_k  \|_2) \nonumber \\
& \| \Delta \Vx_k  \|_2  \leq c_2 \|  \Delta \Vx^{nt} \|_2  = c_2 
\|\Vx_{k+1}^{nt} - \Vx_k \|_2 \nonumber \\
&\leq O(\| \Vx_{k} - \Vx^* \|_2^2) +c_2 ( \|\Vx_k - \Vx^* \|_2)
\end{align}
From \eqref{eq:less_newton}, \eqref{eq:total_ie} and \eqref{eq:delta_ie}, we 
conclude that,
\begin{align}
  \| \Vx_{k+1} - \Vx^* \|_2 & \leq \frac{L_2}{2l} \| \Vx_k^{nt} - \Vx^* \|_2^2  
+ o( \| \Vx_k - \Vx^* \|_2). \nonumber
\end{align}
Because proximal Newton method convergence much quickly, we deduce that $ \Vx_k 
$ converges to $ \Vx^* $ superlinearly.
\end{proof}
\end{appendices}

\end{document}